%% This is file `elsarticle-template-1-num.tex',
%%
%% Copyright 2009 Elsevier Ltd
%%
%% This file is part of the 'Elsarticle Bundle'.
%% ---------------------------------------------
%%
%% It may be distributed under the conditions of the LaTeX Project Public
%% License, either version 1.2 of this license or (at your option) any
%% later version.  The latest version of this license is in
%%    http://www.latex-project.org/lppl.txt
%% and version 1.2 or later is part of all distributions of LaTeX
%% version 1999/12/01 or later.
%%
%% The list of all files belonging to the 'Elsarticle Bundle' is
%% given in the file `manifest.txt'.
%%
%% Template article for Elsevier's document class `elsarticle'
%% with numbered style bibliographic references
%%
%% $Id: elsarticle-template-1-num.tex 149 2009-10-08 05:01:15Z rishi $
%% $URL: http://lenova.river-valley.com/svn/elsbst/trunk/elsarticle-
%% template-1-num.tex $
%%
\documentclass[12pt]{elsarticle}
\usepackage[top=1.0cm,bottom=1.0cm,left=2.5cm,right=2.5cm,
includehead,includefoot]{geometry}
%% 投稿用 2倍行距
%\dcoumentclass[preprint,review,12pt]{elsarticle}

%% Use the option review to obtain double line spacing
%%节约型，用于老师审阅：
%\documentclass[preprint,12pt]{elsarticle}
%   ,draft : no picture
%\usepackage[top=1.0cm,bottom=1.0cm,left=1.5cm,right=1.5cm,includehead,includefoot]{geometry}

%% Use the options 1p,twocolumn; 3p; 3p,twocolumn; 5p; or 5p,twocolumn
%% for a journal layout:
%% \documentclass[final,1p,times]{elsarticle}
%% \documentclass[final,1p,times,twocolumn]{elsarticle}
%% \documentclass[final,3p,times]{elsarticle}
%% \documentclass[final,3p,times,twocolumn]{elsarticle}
%% \documentclass[final,5p,times]{elsarticle}
%% \documentclass[final,5p,times,twocolumn]{elsarticle}

\usepackage[CJKbookmarks=true,
            bookmarksnumbered=true,
            bookmarksopen=true,
            colorlinks=true,
            citecolor=blue,
            linkcolor=blue,
            anchorcolor=green,
            urlcolor=blue
            ]{hyperref}
\usepackage{hyperref}
\usepackage{color}

%% if you use PostScript figures in your article
%% use the graphics package for simple commands
%% \usepackage{graphics}
%% or use the graphicx package for more complicated commands
%% \usepackage{graphicx}
%% or use the epsfig package if you prefer to use the old commands
%% \usepackage{epsfig}

%% The amssymb package provides various useful mathematical symbols
\usepackage{amsmath}
\usepackage{amssymb}
\usepackage{amsthm}
\usepackage{mathbbold}
\usepackage{mathrsfs}
\usepackage{bm}
\usepackage{diagrams}
\usepackage{graphicx}
\usepackage{epstopdf}
\usepackage{subfigure}
\usepackage{caption2}
\usepackage{booktabs}
\usepackage{multirow}
\usepackage{dcolumn}
\usepackage{ifthen}
\ifpdf
  \DeclareGraphicsExtensions{.eps,.pdf,.png,.jpg}
\else
  \DeclareGraphicsExtensions{.eps}
\fi

%% The amsthm package provides extended theorem environments
%% \usepackage{amsthm}

%% The lineno packages adds line numbers. Start line numbering with
%% \begin{linenumbers}, end it with \end{linenumbers}. Or switch it on
%% for the whole article with \linenumbers after \end{frontmatter}.
%% \usepackage{lineno}

%% natbib.sty is loaded by default. However, natbib options can be
%% provided with \biboptions{...} command. Following options are
%% valid:

%%   round  -  round parentheses are used (default)
%%   square -  square brackets are used   [option]
%%   curly  -  curly braces are used      {option}
%%   angle  -  angle brackets are used    <option>
%%   semicolon  -  multiple citations separated by semi-colon
%%   colon  - same as semicolon, an earlier confusion
%%   comma  -  separated by comma
%%   numbers-  selects numerical citations
%%   super  -  numerical citations as superscripts
%%   sort   -  sorts multiple citations according to order in ref. list
%%   sort&compress   -  like sort, but also compresses numerical citations
%%   compress - compresses without sorting
%%
%% \biboptions{comma,round}

% \biboptions{}
\biboptions{square,numbers,sort&compress}
% \biboptions{square,numbers,sort&compress}
% \biboptions{comma,round,authoryear,sort&compress}

\journal{ }

\begin{document}

\renewcommand{\figurename}{Fig.}
\renewcommand{\captionlabeldelim}{.}
\newcommand{\ucite}[1]{\scalebox{1.3}[1.3]{\raisebox{-0.65ex}{\cite{#1}}}}

\newcommand{\ud}{\mathrm{d}}
\newtheorem{defn}{Definition~}[section]
\newtheorem{lem}{Lemma~}[section]
\newtheorem{prop}{Proposition~}[section]
\newtheorem{thm}{Theorem~}[section]
\newtheorem{cor}{Corollary~}[section]
\newtheorem{conj}{Conjecture~}[section]
\newtheorem{exmp}{Example~}[section]
\newtheorem{rem}{Remark~}[section]
\newtheorem{supp}{Suppose~}[section]

\newcommand{\fv}{true}

\begin{frontmatter}

%% Title, authors and addresses

%% use the tnoteref command within \title for footnotes;
%% use the tnotetext command for the associated footnote;
%% use the fnref command within \author or \address for footnotes;
%% use the fntext command for the associated footnote;
%% use the corref command within \author for
%% corresponding author footnotes;
%% use the cortext command for the associated footnote;
%% use the ead command for the email address,
%% and the form \ead[url] for the home page:
%%
%% \title{Title\tnoteref{label1}}
%% \tnotetext[label1]{}
%% \author{Name\corref{cor1}\fnref{label2}}
%% \ead{email address}
%% \ead[url]{home page}
%% \fntext[label2]{}
%% \cortext[cor1]{}
%% \address{Address\fnref{label3}}
%% \fntext[label3]{}

\title{Sparse residual tree and forest}

%% use optional labels to link authors explicitly to addresses:
%% \author[label1,label2]{<author name>}
%% \address[label1]{<address>}
%% \address[label2]{<address>}

\author{Xin Xu}
\ead{xuxin103@163.com,xx2@princeton.edu}

\author{Xiaopeng Luo}
\ead{luo\_works@163.com,xiaopeng@princeton.edu}

\address{Department of Chemistry, Princeton University, Princeton, NJ 08544, USA}

\begin{abstract}
%% Text of abstract
Sparse residual tree (SRT) is an adaptive exploration method for multivariate scattered data approximation. It leads to sparse and stable approximations in areas where the data is sufficient or redundant, and points out the possible local regions where data refinement is needed. Sparse residual forest (SRF) is a combination of SRT predictors to further improve the approximation accuracy and stability according to the error characteristics of SRTs. The hierarchical parallel SRT algorithm is based on both tree decomposition and adaptive radial basis function (RBF) explorations, whereby for each child a sparse and proper RBF refinement is added to the approximation by minimizing the norm of the residual inherited from its parent. The convergence results are established for both SRTs and SRFs. The worst case time complexity of SRTs is $\mathcal{O}(N\log_2N)$ for the initial work and $\mathcal{O}(\log_2N)$ for each prediction, meanwhile, the worst case storage requirement is $\mathcal{O}(N\log_2N)$, where the $N$ data points can be arbitrary distributed. Numerical experiments are performed for several illustrative examples.
\end{abstract}
\begin{keyword}
%% keywords here, in the form: keyword \sep keyword
scattered data\sep sparse approximation \sep binary tree \sep forest \sep radial basis function \sep least squares \sep parallel computing.
%% MSC codes here, in the form: \MSC code \sep code
%% or \MSC[2008] code \sep code (2000 is the default)
\end{keyword}

\end{frontmatter}

%%
%% Start line numbering here if you want
%%
% \linenumbers

%% main text
\section{Introduction}
\label{SRTF:s1}

Multivariate scattered data approximation problems arise in many areas of engineering and scientific computing. In the last five decades, radial basis function (RBF) methods have gradually become an extremely powerful tool for scattered data. This is not only because they possess the dimensional independence and remarkable convergence properties (see, e.g., \cite{RiegerC2010_RBFSamplingInequalities,WendlandH2005B_ScatteredData,
WendlandH2005_RBFSamplingInequalities,WuZ1993A_ErrorEstimatesRBF,LuoXP2014M_ReproducingKernelHDMR}), but also because a number of techniques, such as multipole (far-field) expansions \cite{BeatsonRK1999M_FarFieldExpansions,
WendlandH2005B_ScatteredData}, multilevel methods of compactly supported kernels \cite{FloaterM1996M_MultilevelRBF,GeorgoulisE2012M_MultilevelRBF,
WendlandH2005B_ScatteredData,XuX2015A_MeshlessPDE} and partition of unity methods \cite{BabuskaI1997M_PU,LarssonE2017M_RBF&PU,WendlandH2005B_ScatteredData}, have been proposed to reduce both the condition number of the resulting interpolation matrix and the complexity of calculating the interpolant. These techniques are, of course, very important in practice, however, in contrast to the stability and efficiency, maybe the later question is the most crucial one for a general representation of functions, that is, how to accurately capture and represent the intrinsic structures of a target function, especially in high dimensional space.

More specifically, when the data and the expected accuracy are given, we usually do not know at all whether the data is redundant or insufficient for the target function. So it is necessary to consider the following three questions:
\begin{itemize}
  \item Whether the current data is just right to reach the expected accuracy?
  \item How to establish a sparse approximation by ignoring the possible redundancy?
  \item How to update the approximation by replenishing the possible insufficiency?
\end{itemize}
It is often difficult to distinguish between data insufficiency and redundancy, and they could in fact exist simultaneously in different local regions.

Sparse residual tree (SRT) is developed for the purpose of representing the intrinsic structure of arbitrary dimensional scattered data. SRT is based on both tree decomposition and adaptive radial basis function (RBF) explorations. For each child a concise and proper RBF refinement, whose shape parameter is related to the current regional scale, is added to the approximation by minimizing the $2$-norm of the residual inherited from its parent; then the tree node will be further split into two according to the updated residual; and this process finally stops when the data is insufficient or the expected accuracy is reached.

The word ``sparse" here has two meanings: (i) the RBF exploration applies only to a sparse but sufficient subset of the current data, which is to ensure the efficiency of the training process; (ii) the centers of the RBF refinement are also sparse relative to the sparse subset, which is to ensure the efficiency of the prediction process. Thus, on the one hand, SRT provides sparse approximations in areas where the data is sufficient or redundant, and on the other hand, SRT points out the possible local regions where data refinement is needed. In order to ensure stability, the condition number is strictly controlled for every refinement. Furthermore, SRT also yields the excellent performance in terms of efficiency. Similar to most typical tree-based algorithms \cite{BentleyJ1975M_kdTree,FriedmanJ1977M_BallTree}, the worst case time complexity of SRTs is $\mathcal{O}(N\log_2N)$ for the initial training work and $\mathcal{O}(\log_2N)$ for each prediction; and the worst case storage requirement is $\mathcal{O}(N\log_2N)$, where the $N$ data points can be arbitrary distributed. The training process can be accelerated using multi-core architectures. This hierarchical parallel algorithm allows one to easily handle ten millions of data points on a personal computer, or much more on a computer cluster.

Although there are some different attempts to combine tree structures and RBF methods in the field of machine learning (see, e.g., \cite{AkbilgicO2014M_Tree&RBF,
FeiB2006M_Tree&SVM,HadyM2010M_Tree&SVM}), they have not paid any attention to their convergence. In fact, similar to multilevel methods \cite{WendlandH2005B_ScatteredData}, these combinations do not always guarantee convergence. Most of the previously used error estimates for RBF interpolation depend on the so-called power function \cite{NarcowichF2003A_RefinedErrorEstimatesRBF,
SchabackR1995A_ErrorEstimates&ConditionNumbersRBF,WendlandH2005B_ScatteredData,
WuZ1993A_ErrorEstimatesRBF}. But recently, sampling inequalities have become a more powerful tool in this respect, and not limited to the case of interpolation \cite{NarcowichF2005A_RBFSamplingInequalities,MadychW2006A_SamplingInequalities,
RiegerC2010_RBFSamplingInequalities,WendlandH2005_RBFSamplingInequalities}. Sampling inequalities describe the fact that a differentiable function whose derivatives are bounded cannot attain large values if it is small on a sufficiently dense discrete set. Together with the stability of the least squares framework for residual trees, we prove that a SRT based on arbitrary basis functions leads to algebraic convergence orders for finitely smooth functions. Further combining the appropriate embeddings of certain native spaces, we also prove that the Gaussian or inverse multiquadric based SRT leads to exponential convergence orders for infinitely smooth functions.

Since the SRT approximation is actually piecewise smooth, the error of each piece is significantly larger near the boundary. And the sparse residual forest (SRF), which is a combination of SRT predictors with different tree decompositions, is specifically designed to improve this situation. For all SRTs in the SRF, the splitting method of each SRT depends on the values of a random vector sampled independently and with the same distribution. This provides an opportunity to avoid those predictions with large squared deviations and to use the average value of the remaining predictions to enhance both stability and convergence. In practice, SRFs composed of a small number of SRTs perform quite well than individual SRTs; and in theory, similar to random forests \cite{BreimanL2001A_RandomForest}, the error for SRFs converges with probability $1$ to a limit as the number of SRTs in the SRF becomes large. It is more efficient and accurate than the traditional partition of unity method for overcoming the boundary effect of the error.

The remainder of the paper is organized as follows. After appropriate notation and
preliminaries are introduced in section \ref{SRTF:s2}, section \ref{SRTF:s3} and section \ref{SRTF:s4} give the frameworks of the SRTs and SRFs, respectively, and the stability, convergence and complexity of the SRT algorithm are discussed in section \ref{SRTF:s5}. A series of numerical experiments is given in section \ref{SRTF:s6}. In section \ref{SRTF:s7}, we draw some conclusions on the new method presented in this work and discuss possible extensions.

\section{Notation and Preliminaries}
\label{SRTF:s2}

Throughout the paper, $e$ denotes Euler's constant, the space dimension $d\in\mathbb{N}$, the domain $\Omega\subset\mathbb{R}^d$ is convex, $f:\Omega\to\mathbb{R}~\textrm{or} ~\mathbb{C}$ is a given target function, $X=\{x_i\}_{i=1}^N \in\Omega$ is a set of pairwise distinct interpolation points with the fill distance
\begin{equation}\label{SRTF:eq:h}
  h:=h_{X,\Omega}:=\sup_{x\in\Omega}\min_{x_i\in X}\|x-x_i\|_2,
\end{equation}
and $f_X=(f(x_1),\cdots,f(x_N))^\mathrm{T}$ are known function values.

\begin{rem}
It is worth noting that $\Omega$ can also be extended to a finite union of convex domains, thereby $\Omega$ is bounded with Lipschitz boundary and satisfies an interior cone condition. In this case, we can first deal with these convex domains separately and then combine them into a meaningful whole by a suitable partition of unity, see section \ref{SRTF:s6} for examples.
\end{rem}
We will focus mainly on the Gaussian kernel $G(x)=G_\delta(x):=e^{-\delta^2\|x\|_2^2}$, where $\delta>0$ is often called the \emph{shape parameter}. Suppose that $\Omega'\subseteq\Omega$ is also convex, then for the subset $X'=\{x'_i\}_{i=1}^{N'}=X\cap\Omega'$ and selected centers $X''=\{x''_i\}_{i=1}^{N''}\subseteq X'$, where $N''\leqslant N'\leqslant N$, an Gaussian RBF approximation $s$ is required to be of the form
\begin{equation*}
  s_f(x,\alpha)=\sum_{j=1}^{N''}\alpha_jG(x-x''_j),~~~x\in\Omega'
\end{equation*}
with unknown coefficients $\alpha=(\alpha_1,\cdots,\alpha_{N''})^\mathrm{T}$.
Consider the following least squares (LS) problem
\begin{equation}\label{SRTF:eq:LS}
  \min_{\alpha\in\mathbb{R}^{N''}}\sum_{i=1}^{N'}
  \Big(s_f(x'_i,\alpha)-f(x'_i)\Big)^2.
\end{equation}
It is worth noting here that we consider the case of $N''\ll N'$ as a sparse approximation, and \eqref{SRTF:eq:LS} can be rewritten in matrix form as
\begin{equation}\label{SRTF:eq:LSM}
  \min_{\alpha\in\mathbb{R}^{N''}}\|\Phi_{X',X''}\alpha-f_{X'}\|_2^2,
\end{equation}
where the matrix $\Phi_{X',X''}\in\mathbb{R}^{N'\times N''}$ is generated by the Gaussian kernel $G(x)$. Suppose $\Phi_{X',X''}$ have a $QR$ decomposition $\Phi_{X',X''}=QR$, where $Q\in\mathbb{R}^{N'\times N''}$ has orthonormal columns and $R\in\mathbb{R}^{N''\times N''}$ is upper triangular, then the problem \eqref{SRTF:eq:LS} has a unique solution $\alpha^*=\Phi_{X',X''}^{-1}f_{X'}=R^{-1}Q^\mathrm{T}f_{X'}$, where $R$ and $Q^\mathrm{T}f_{X'}\in\mathbb{R}^{N''}$ can be recursively obtained without computing $Q$ by Householder transformations \cite{GolubG2013B_MatrixComputations}.

We shall consider functions from certain Sobolev spaces $W_p^k(\Omega)$ with $1\leqslant p<\infty$ and native spaces of Gaussians, $\mathcal{N}_G(\Omega)$, respectively. The Sobolev space $W_p^k(\Omega)$ consists of all functions $f$ with distributional derivatives $D^\gamma f\in L_p(\Omega)$ for all $|\gamma|\leqslant k$, $\gamma\in\mathbb{N}_0^d$. Associated with these spaces are the (semi-)norms
\begin{equation*}
  |f|_{W_p^k(\Omega)}=\left(\sum_{|\gamma|=k}
  \|D^\gamma f\|_{L_p(\Omega)}^p\right)^{1/p}
  ~~\textrm{and}~~
  \|f\|_{W_p^k(\Omega)}=\left(\sum_{|\gamma|\leqslant k}
  \|D^\gamma f\|_{L_p(\Omega)}^p\right)^{1/p}.
\end{equation*}
For the Gaussian kernel $G(x)=e^{-\delta^2\|x\|_2^2}$ the native space on $\mathbb{R}^d$ is given by
\begin{equation*}
  \mathcal{N}_G(\mathbb{R}^d)=\left\{f\in C(\mathbb{R}^d)\cap
  L_2(\mathbb{R}^d):\|f\|_{\mathcal{N}_G}:=\left(\int_{\mathbb{R}^d}
  |\hat{f}(\omega)|^2e^{\frac{\|\omega\|_2^2}{4\delta^2}}
  \ud\omega\right)^\frac{1}{2}<\infty\right\},
\end{equation*}
further, the native space $\mathcal{N}_G(\Omega)$ on a bounded domain $\Omega$ is defined as
\begin{equation*}
  \mathcal{N}_G(\Omega)=\left\{f|_\Omega:
  f\in\mathcal{N}_G(\mathbb{R}^d)~\textrm{and}~
  (f,g)_{\mathcal{N}_G(\mathbb{R}^d)}=0,
  \forall g\in\mathcal{N}_G(\mathbb{R}^d)
  ~\textrm{s.t.}~g|_\Omega=0\right\},
\end{equation*}
where $(f,g)_{\mathcal{N}_G(\mathbb{R}^d)}=\int_{\mathbb{R}^d}
\hat{f}(\omega)\overline{\hat{g}(\omega)}
e^{\frac{\|\omega\|_2^2}{4\delta^2}}\ud\omega$. For any $\Omega\subseteq\mathbb{R}^d$ and all $k\geqslant0$,
\begin{equation}\label{SRTF:GNSembedding}
  \mathcal{N}_G(\Omega)\subset W_2^k(\Omega)~~\textrm{with}~~
  \|f\|_{W_2^k(\Omega)}\leqslant C_G^kk^{k/2}\|f\|_{\mathcal{N}_G},
  ~~\forall f\in\mathcal{N}_G(\Omega),
\end{equation}
where $C_G=\sqrt{\max(\delta^{-d},1)(\frac{8\delta^2}{e}+2)}$ depends only on the shape parameter $\delta$ and the space dimension $d$, see Theorem $7.5$ of \cite{RiegerC2010_RBFSamplingInequalities} for details.

We can also consider inverse multiquadrics $M(x)=M_\delta(x)= (1/\delta^2+\|x\|_2)^{-\beta}$ for $\beta>\frac{d}{2}$, and the inner product of native spaces $\mathcal{N}_M(\mathbb{R}^d)$ can be defined as
\begin{equation*}
  (f,g)_{\mathcal{N}_M(\mathbb{R}^d)}=\int_{\mathbb{R}^d}
  \hat{f}(\omega)\overline{\hat{g}(\omega)}\widehat{M}^{-1}(\omega)\ud\omega,
  ~~\forall f,g\in\mathcal{N}_M(\mathbb{R}^d),
\end{equation*}
where $\widehat{M}(\omega)=\frac{2^{1-\beta}}{\Gamma(\beta)} (\delta\|\omega\|_2)^{\beta-d/2}K_{d/2-\beta}(\|\omega\|_2/\delta)$ and $K_v$ is the modified Bessel functions; and similarly to Gaussian kernels, for any $\Omega\subseteq\mathbb{R}^d$ and all $k\geqslant0$,
\begin{equation}\label{SRTF:MNSembedding}
  \mathcal{N}_M(\Omega)\subset W_2^k(\Omega)~~\textrm{with}~~
  \|f\|_{W_2^k(\Omega)}\leqslant C_M^kk^k\|f\|_{\mathcal{N}_M},
  ~~\forall f\in\mathcal{N}_M(\Omega),
\end{equation}
where $C_M>0$ depends only on $\beta$ and $d$, see Theorem $7.6$ of \cite{RiegerC2010_RBFSamplingInequalities} for details.

\section{Sparse residual tree}
\label{SRTF:s3}

Sparse residual tree is based on both tree decomposition and adaptive RBF explorations. Suppose $\epsilon_{\mathrm{E}}>0$ is the expected relative absolute error (RAE) for an approximation $s$ of the target function $f$ on the interpolation dataset $X$, where
\begin{equation}\label{SRTF:eq:RAE}
  \mathrm{RAE}=\frac{\max_i|s(x_i)-f(x_i)|}{\max_i|f(x_i)|}.
\end{equation}
For each child, for example, $X'\subset\Omega'$, which is $X\subset\Omega$ itself in the beginning, we need to (i) explore a sparse and proper RBF approximation $s_{r'}$ to minimize the $2$-norm of the current residual $r'(X')$; and then, (ii) split the dataset $X'$ into two proper subsets $X'_1$ and $X'_2$ as well as the domain $\Omega'$ into two proper subdomains $\Omega'_1$ and $\Omega'_2$, as shown in the following diagram. We call it an exploration-splitting process.
\begin{diagram}
&                      & X'\subset\Omega' &                      & \\
&      \ldTo(1,2)      &                  &      \rdTo(1,2)      & \\
& X'_1\subset\Omega'_1 &                  & X'_2\subset\Omega'_2 & \\
\end{diagram}

As mentioned above, the RBF exploration applies only to a sparse but sufficient subset of $X'$ and the centers of $s_{r'}$ are also sparse relative to the sparse subset. Hence, let us start with a sparsification of the dataset $X'$ when its number is large.

\subsection{Sparsification of datasets}
\label{SRTF:s3:1}

Except for updating the residual, we hope to improve efficiency by replacing $X'$ with its subset which has the same distribution of $X'$ if the number of $X'$ is large. Since $s_{r'}$ is only used to refine the relative global component of the current residual $r'$, it is not necessary to use all the data. Let $I$ be an index vector containing $N'_I(\leqslant N')$ unique integers selected randomly from $1$ to $N'$ inclusive, then $X'(I)$ is exactly what we need. Actually, from the independence of $X'$ and $I$, it follows that
\begin{equation*}
  \mathcal{P}_{X'(I)}(x)=\frac{\mathcal{P}_{X',I}(x,i)}{\mathcal{P}_{I}(i)}
  =\frac{\mathcal{P}_{X'}(x)\mathcal{P}_{I}(i)}{\mathcal{P}_{I}(i)}
  =\mathcal{P}_{X'}(x),~~x\in\Omega',~i\in I;
\end{equation*}
i.e., $X'(I)$ has the same probability distribution of $X'$. And the choice of the number $N'_I$ will be discussed later.

\subsection{Quasi-uniform subsequence}
\label{SRTF:s3:2}

Now we consider a method for generating a quasi-uniform subsequence of $X'(I)$, which is the basis for adaptive RBF explorations. To find a quasi-uniform subsequence $U'$ from $X'(I)=\{x'_{I(1)},\cdots,x'_{I(N'_I)}\}$, we start with the approximate mean point, that is,
\begin{equation}\label{SRTF:eq:Q1}
  u'_1=\arg\min_{x'\in X'(I)}\|x'-\overline{X'(I)}\|_2,~~\textrm{where}~~
  \overline{X'(I)}=\frac{1}{N'_I}\sum_{i=1}^{N'_I}x'_{I(i)}.
\end{equation}
And for known $U'_j=\{u'_1,\cdots,u'_j\}$, the subsequent point $u'_{j+1}$ is determined as
\begin{equation}\label{SRTF:eq:Qjp1}
  u'_{j+1}=\arg\max_{x'\in X'(I)}\left(\min_{1\leqslant l\leqslant j}
  \|x'-u'_l\|_2\right),
\end{equation}
i.e., $u'_{j+1}\in X'(I)$ is the point that maximizes the minimum of the set of distances from it to a point in $U'_j$. By storing an $N'_I$-dimensional distance vector and an $N'_I$-dimensional index vector, it only takes $\mathcal{O}(jN'_I)$ operations to generate $j$ quasi-uniform points $U'_j$ and determine the relationship between every point of $X'(I)$ and the Voronoi diagram of $U'_j$, see Fig. \ref{SRTF:fig:1} for examples.

\begin{figure}[!htb]
\centering
\begin{minipage}{1.0\textwidth}
  \includegraphics[width=0.49\textwidth]{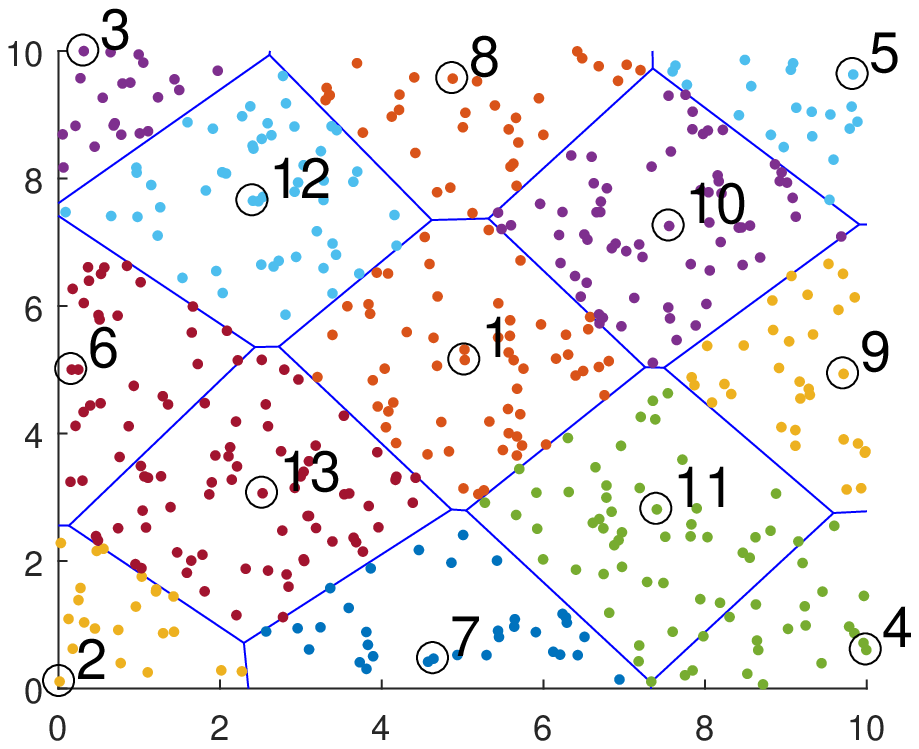}
  \includegraphics[width=0.49\textwidth]{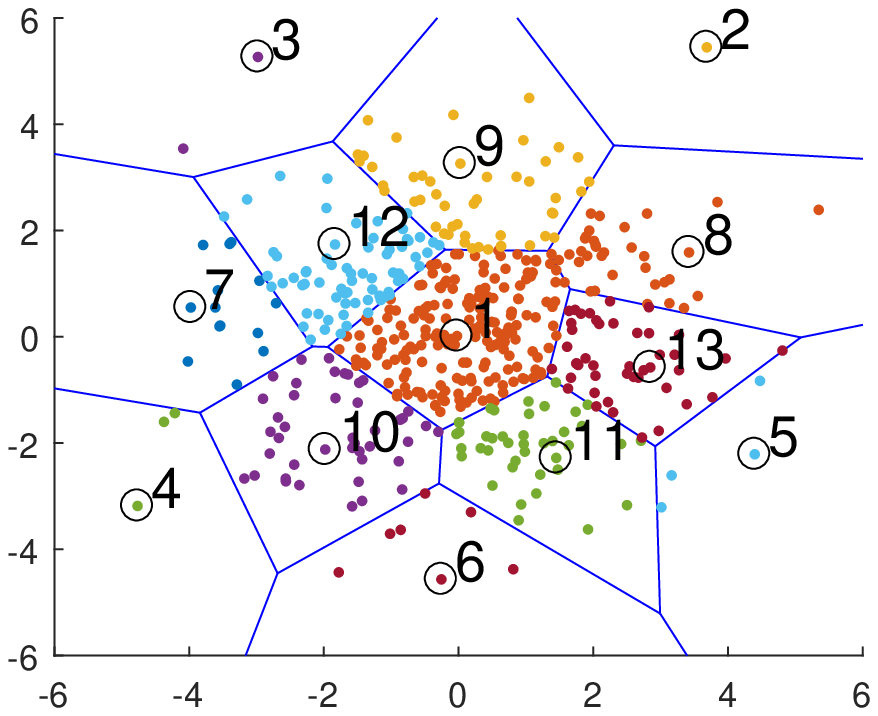}
\end{minipage}
\caption{\itshape\small{Left: $13$ quasi-uniform points $U_{13}$ ($\circ$) generated from $500$ uniform random samples ($\cdot$) and the Voronoi diagram of $U_{13}$. Right: $13$ quasi-uniform points $U_{13}$ ($ \circ$) generated from $500$ normal random samples ($\cdot$) with $\mu=(0,0)$ and $\Sigma = \textup{diag}([3,3])$ and the Voronoi diagram of $U_{13}$.}}
\label{SRTF:fig:1}
\end{figure}

\subsection{Adaptive RBF exploration}

The purpose of this adaptive exploration is to determine the centers of the RBF refinement $s_{r'}$ which is only used to refine the relative global component of $r'$. We first introduce the working parameters of SRT
\begin{equation}\label{SRTF:eq:WP}
  \omega=(\omega_1,\omega_2,\omega_3,\omega_4)^\mathrm{T},
\end{equation}
where $\omega_1=\kappa>0$ is the upper bound of condition numbers, $\omega_2>0$ is the termination error of explorations, $\omega_3\in(0,1)$ is the factor of shape parameters, and $\omega_4>0$ is the termination factor of tree nodes. For a fixed factor $\omega_3$, the current shape parameter can be determined as
\begin{equation*}
  \delta'=\sqrt{-\frac{\ln(\omega_3)}
  {\max_{x'\in X'(I)}\|x'-\overline{X'(I)}\|_2^2}},~~\textrm{where}~~
  \overline{X'(I)}=\frac{1}{N'_I}\sum_{i=1}^{N'_I}x'_{I(i)};
\end{equation*}
and the meaning of the remaining parameters will be clarified more clearly later.

Suppose $\{\chi'_1,\cdots,\chi'_{j'}\}$ are the centers inherited from its father, a reasonable idea is to choose the $(j'+1)$th center $\chi'_{j'+1}$ from the quasi-uniform subsequence $U'_{j'+d+1}$ which is generated by \eqref{SRTF:eq:Qjp1} with the initial $U'_{j'}=\{\chi'_1,\cdots,\chi'_{j'}\}$; for the root node, we choose $\chi'_1=u'_1\in U'_1$ given by \eqref{SRTF:eq:Q1}. Without loss of generality, for known $\{\chi'_1,\cdots,\chi'_j\}\subset U'_{j+d}$ with $j\geqslant j'$, we determine the $(j+1)$th center $\chi'_{j+1}$ from $U'_{j+d+1}-\{\chi'_1,\cdots,\chi'_j\}$ by the following procedure:
\begin{enumerate}
  \item From the recursive QR decomposition (as mentioned in section \ref{SRTF:s2}), $R_j$ and $Q_j^\mathrm{T}r'(X'(I))$ can be recursively obtained by $R_{j-1}$ and $Q_{j-1}^\mathrm{T}r'(X'(I))$ without computing $Q_{j-1}$, where
      \begin{equation*}
        Q_vR_v=\Phi_{X'(I),\{\chi'_1,\cdots,\chi'_v\}}\in\mathbb{R}^{N'_I,v},
        ~~1\leqslant v\leqslant j,
      \end{equation*}
      and $\Phi_{X'(I),\{\chi'_1,\cdots,\chi'_v\}}$ is generated by the Gaussian kernel $G_{\delta'}$.
  \item The temporary residual can be obtained by
      \begin{equation*}
        r'_j(X'(I))=r'(X'(I))-\sum_{1\leqslant i\leqslant j}
        \alpha_j^{(i)}e^{-\delta'^2\|X'(I)-\chi'_i\|_2^2}.
      \end{equation*}
      where the coefficients $\alpha_j=\left(\alpha^{(1)}_j,\cdots,
      \alpha^{(j)}_j\right)^\mathrm{T}=R_j^{-1}Q_j^\mathrm{T}r'(X'(I))$.
  \item Suppose $\{\Lambda_l\}_{l=1}^{j+d+1}$ be the Voronoi diagram of the set $U'_{j+d+1}$ and $\{\Lambda_l\}_{l\in\Gamma}$ are Voronoi regions with respect to those elements from the complementary set $U'_{j+d+1}-\{\chi'_1,\cdots,\chi'_j\}$, then
      \begin{equation}\label{SRTF:eq:centerjp1}
        \chi'_{j+1}=u'_{l^*}\in U'_{j+d+1}-\{\chi'_1,\cdots,\chi'_j\},
      \end{equation}
      where
      \begin{equation*}
        l^*=\arg\max_{l\in\Gamma}\sum_{x'\in\Lambda_l\cap X'(I)}
        \frac{|r'_j(x')|^2}{n_l},
      \end{equation*}
      and $n_l$ is the point number of $\Lambda_l\cap X'(I)$.
  \item And the termination criteria is
      \begin{equation}\label{SRTF:eq:centerT}
        \kappa(R)>\omega_1~~\textrm{or}~~
        \epsilon_j-\epsilon_{j+1}<\omega_2~~\textrm{or}~~j+1=N'_I,
      \end{equation}
      where $\kappa(R)=\frac{\max_l|R_{ll}|}{\min_l|R_{ll}|}$ is an estimation of the condition number $\|R^{-1}\|\|R\|$ and
      \begin{equation*}
        \epsilon_j=\sqrt{\frac{1}{N'_I}\sum_{x'\in X'(I)}(r'_j(x'))^2}.
      \end{equation*}
\end{enumerate}
To ensure that the centers is not too sparse, its number should usually be greater than $d+2$ (imagine a case that the domain $\Omega'$ is a $d$-dimensional simplex). Obviously, each newly selected center is in the Voronoi region with the largest mean squared error of the temporary residual. This allows the exploration to effectively capture the global component of the residual, see Fig. \ref{SRTF:fig:2} for examples.

\begin{figure}[!htb]
\centering
\begin{minipage}{1.0\textwidth}
  \includegraphics[width=0.49\textwidth]{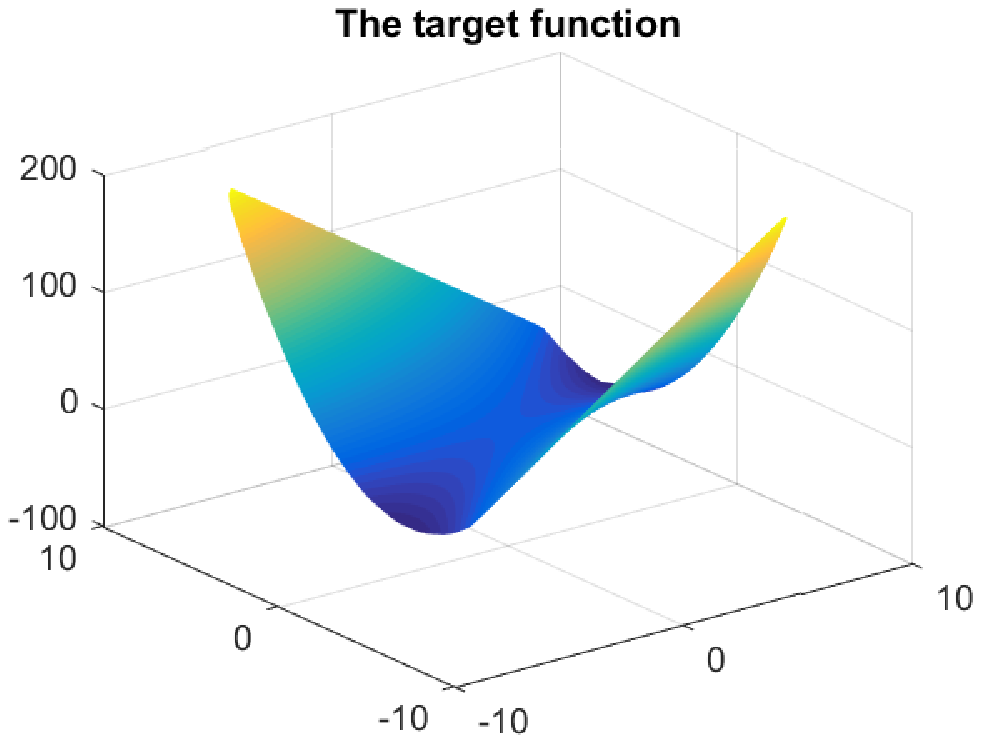}
  \includegraphics[width=0.49\textwidth]{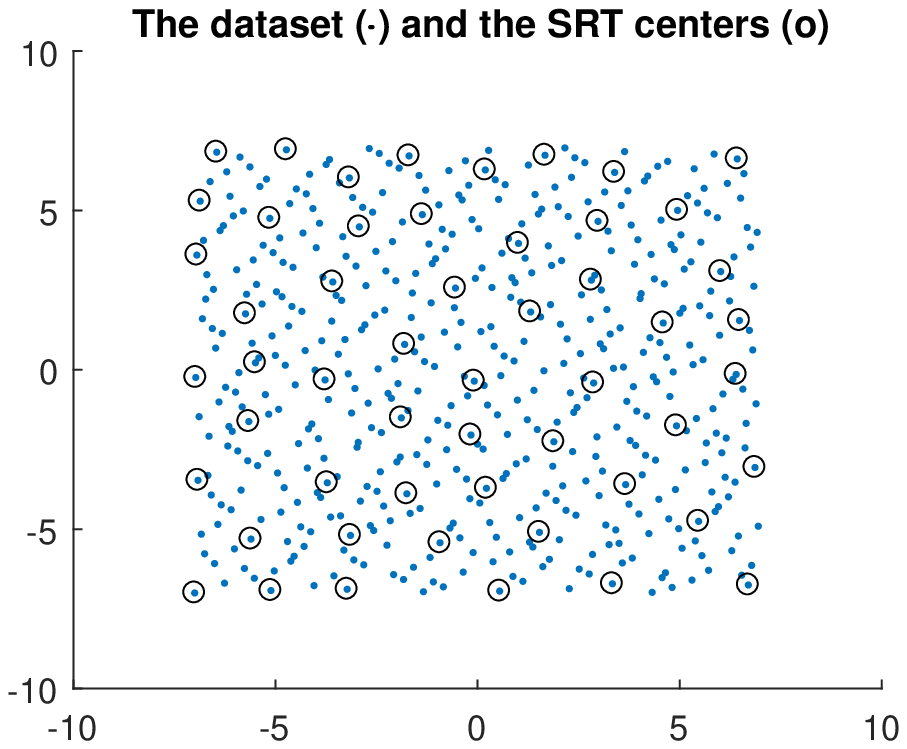}
\end{minipage}
\begin{minipage}{1.0\textwidth}
  \includegraphics[width=0.49\textwidth]{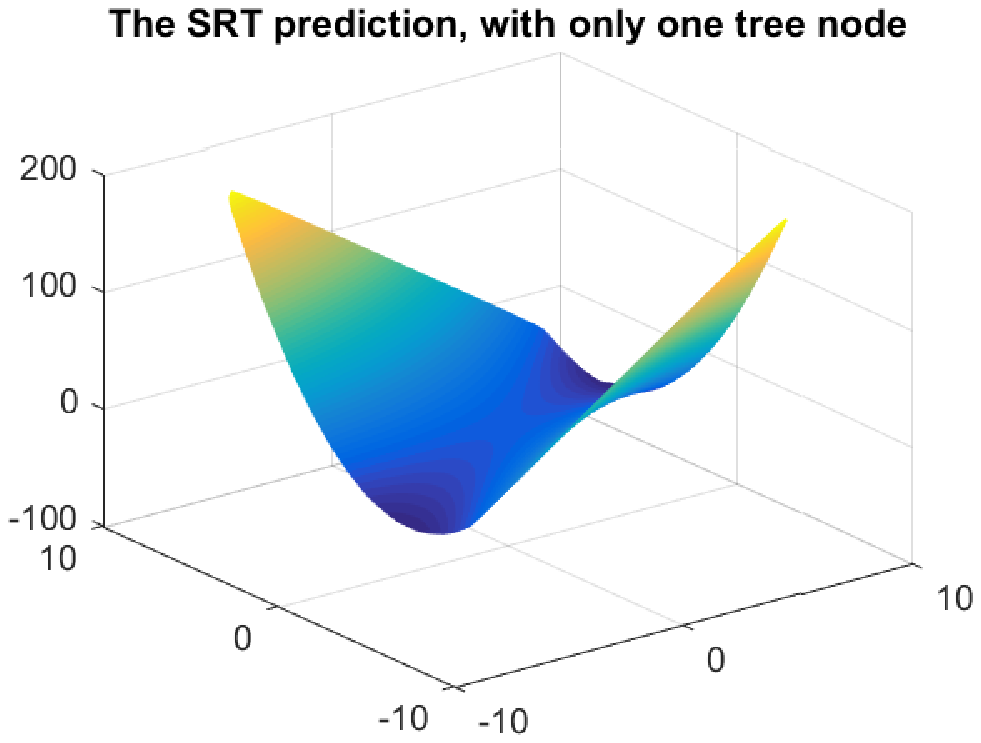}
  \includegraphics[width=0.49\textwidth]{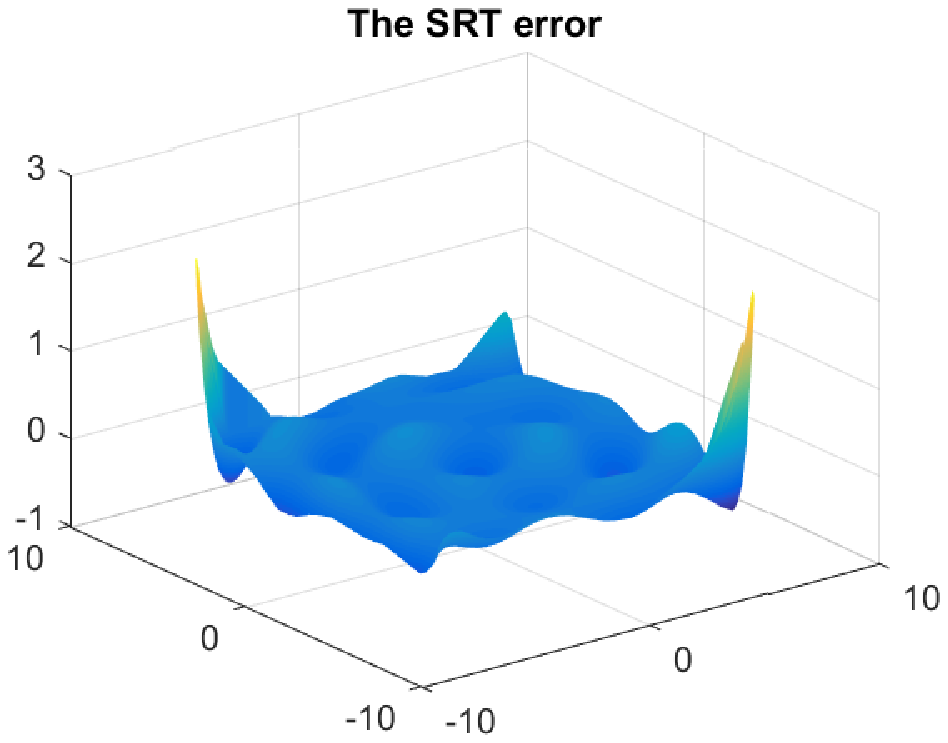}
\end{minipage}
\caption{\itshape\small{The target function $y=-2x_1x_2+2x_2^2,~x\in[-7,7]^2$, the dataset $X$ is a $2$-dimensional Halton sequence of length $500$, and the SRT prediction has only one node with $53$ centers for the expected RAE $\epsilon_{\mathrm{E}}=0.01$.}}
\label{SRTF:fig:2}
\end{figure}

The sparse RBF refinement $s_{r'}$ is obtained when the exploration is terminated, then we update the residual $r''$ on the full set $X'$. Let the final number of the centers is $N''$ and relevant coefficients $\alpha'=\left(\alpha'_1,\cdots,\alpha'_{N''}\right)^\mathrm{T}$, then
\begin{equation}\label{SRTF:eq:updError}
  r''(X')=r'(X')-\sum_{1\leqslant i\leqslant N''}
  \alpha'_ie^{-\delta'^2\|X'-\chi'_i\|_2^2}.
\end{equation}

In addition, assume that the number of all currently existing nodes is $M$ and $\{n_c^{(i)}\}_{i=1}^M$ is the set of the center number of each node, now define the average
\begin{equation}\label{SRTF:eq:Anc}
  \bar{n}_c=\frac{1}{M}\sum_{i=1}^Mn_c^{(i)},
\end{equation}
and we can use a certain multiple of the average $\bar{n}_c$, say $100$ times, as the value of $N'_I$ for the sparsification of the next node. For the initial node we usually take a fixed value related to the dimension $d$.

\subsection{Equal binary splitting and termination}
\label{SRTF:s3:4}

First we consider the selection of two splitting points, then use a hyperplane, whose normal is defined by these two points, to split all the points $X'$ into two parts as well as the domain $\Omega'$ into two subdomains. Clealy, since the half space and $\Omega'$ are both convex, each subdomain is also convex. In order to block the spread of error, we expect to separate the points with large errors from those with small errors. First, we generate $d+1$ quasi-uniform points $U'_{d+1}$ of $X'(I)$ by the method of subsection \ref{SRTF:s3:2} with a different starting point:
\begin{equation*}
  u'_1=\arg\max_{x'\in X'(I)}\|x'-\overline{X'(I)}\|_2,~~\textrm{where}~~
  \overline{X'(I)}=\frac{1}{N'_I}\sum_{i=1}^{N'_I}x'_{I(i)}.
\end{equation*}
Assume that the domain $\Omega'$ is a $d$-dimensional simplex and $X'$ is dense enough, then $U'_{d+1}$ can almost be viewed as its vertices. Let $\{\Lambda_l\}_{l=1}^{d+1}$ be the Voronoi diagram of $U'_{d+1}$, then the first splitting point is determined as
\begin{equation*}
  x'_a=u'_{l^*},
\end{equation*}
where $l^*=\arg\max_l\sum_{x'\in\Lambda_l\cap X'(I)}\frac{|r''(x')|^2}{n_l}$ and $n_l$ is the point number of $\Lambda_l\cap X'(I)$. Then the second splitting point is determined as
\begin{equation*}
  x'_b=\arg\max_{x'\in X'(I)}\|x'-x'_a\|_2.
\end{equation*}
Then, according to the projections of $X'$ in the direction $x'_b-x'_a$ and its median, $X'$ can be splitted into $X'_1$ and $X'_2$ with the sizes $\lceil\frac{N'}{2}\rceil$ and $N'-\lceil\frac{N'}{2}\rceil$, respectively; where $\lceil t\rceil$ denotes the least integer greater than or equal to $t$. Specifically, let $\vec{n}'=(x'_b-x'_a)^\mathrm{T}$, then the projections
\begin{equation*}
  P_{\vec{n}'}(X')=X'\vec{n}',~~\textrm{where}~~
  X'\in\mathbb{R}^{N'\times d}~\textrm{and}~\vec{n}'\in\mathbb{R}^{d\times 1};
\end{equation*}
let $c'=\textrm{median}(P_{\vec{n}'}(X'))$, then $X'_1$ and $X'_2$ can be given as
\begin{equation}\label{SRTF:eq:split1}
  X'_1=\{x'\in X':P_{\vec{n}'}(x')\leqslant c'\}~~\textrm{and}~~X'_2=X'-X'_1;
\end{equation}
and similarly, $\Omega'_1$ and $\Omega'_2$ can be given as
\begin{equation}\label{SRTF:eq:split2}
  \Omega'_1=\{x'\in \Omega':x'\vec{n}'\leqslant c'\}
  ~~\textrm{and}~~\Omega'_2=\Omega'-\Omega'_1.
\end{equation}
Since the local high-frequency error tends to propagate over the entire domain, blocking its propagation is very important for a sparse approximation, and this is the motivation for designing the above splitting, see Fig. \ref{SRTF:fig:3}.
\begin{figure}[!htb]
\centering
\begin{minipage}{1.0\textwidth}
  \includegraphics[width=0.49\textwidth]{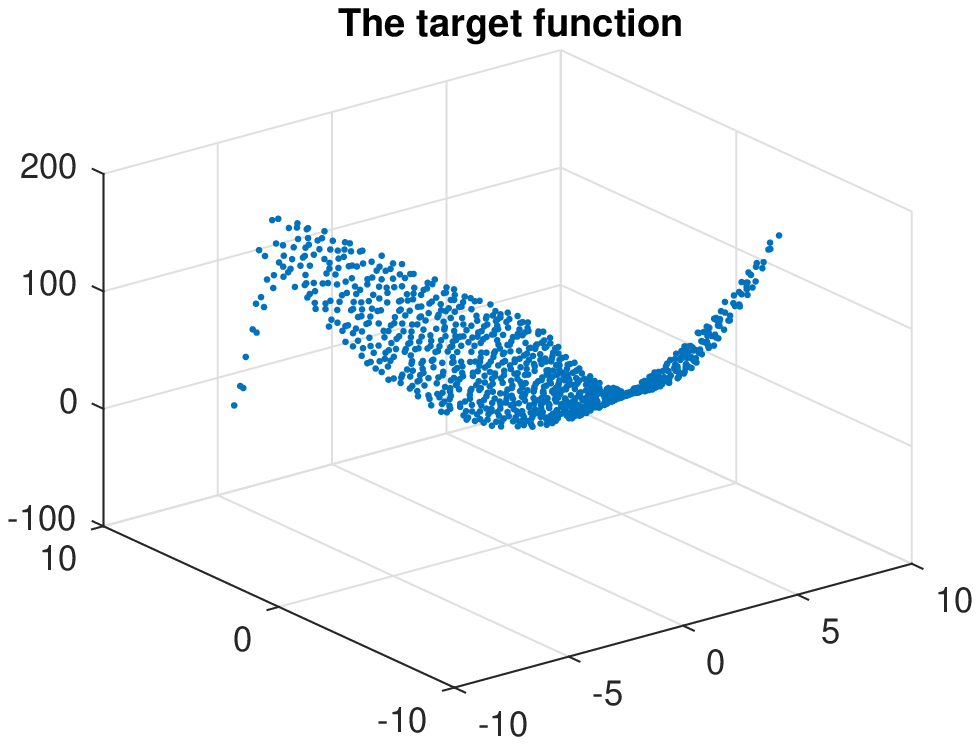}
  \includegraphics[width=0.49\textwidth]{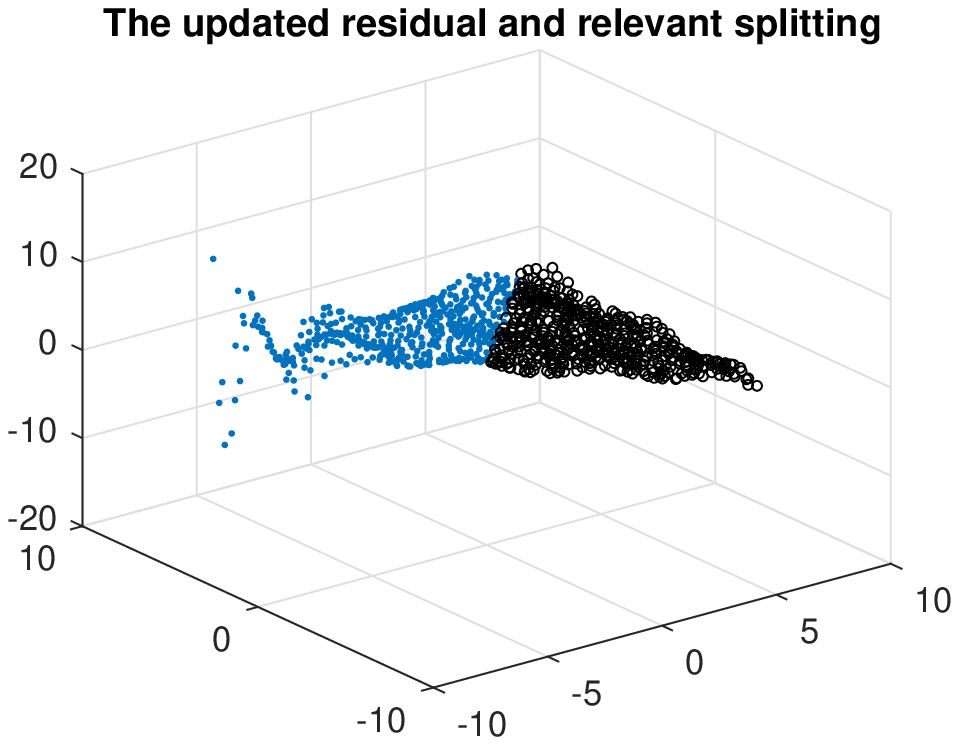}
\end{minipage}
\caption{\itshape\small{The target function $y=-2x_1x_2+2x_2^2-200\exp(-0.7(x_1+7)^2-0.7(x_2-7)^2),~x\in\{x\in[-7,7]^2:x_1+x_2 >0\}$, the dataset $X$ is a $2$-dimensional Halton sequence of length $1000$.}}
\label{SRTF:fig:3}
\end{figure}

This exploration-splitting process finally stops if the expected RAE $\epsilon_{\mathrm{E}}$ is reached or the data is insufficient at the current tree node. Another important use of the average $\bar{n}_c$ defined in \eqref{SRTF:eq:Anc} is to determine whether the data is sufficient. Obviously, a sparse approximation must be based on relatively sufficient data, so if the size of $X'_1$ or $X'_2$ is less than $\omega_4$ times the average $\bar{n}_c$ and the RAE of residual still does not reach the expected $\epsilon_{\mathrm{E}}$, then we consider that the relevant node is lack of data, terminate further operations and record the node. A proper $\omega_4$ can guarantee that the prediction does not over-fit the data.

\subsection{SRT prediction and its error characteristics}

Suppose $s'$ is the current approximation on the domain $\Omega'$ and $s'_{r''(X'_i)}$ is the refinement on $\Omega'_i~(i=1,2)$. Then the next approximation $s'_i$ on $\Omega'_i~(i=1,2)$ can be given as
\begin{equation}\label{SRTF:eq:J}
  s'_i(x)=s'(x)+s'_{r''(X'_i)}(x),~~\forall x\in\Omega'_i.
\end{equation}
It is clear that the SRT prediction is actually piecewise smooth on the original domain $\Omega$, hence the error of each piece will be significantly larger near the boundary.

The following example illustrates the error characteristics of SRTs. Although the SRT prediction, as shown on the left-hand side of Fig. \ref{SRTF:fig:4}, can adaptively build a piecewise and sparse approximation according to local features of the target function, the approximation error, as shown on the right-hand side of Fig. \ref{SRTF:fig:4}, may be significantly larger near the boundary of each piece. Hence, we will introduce the sparse residual forest for overcoming this boundary effect of the error in the next section.

\begin{figure}[!htb]
\centering
\begin{minipage}{1.0\textwidth}
  \includegraphics[width=0.49\textwidth]{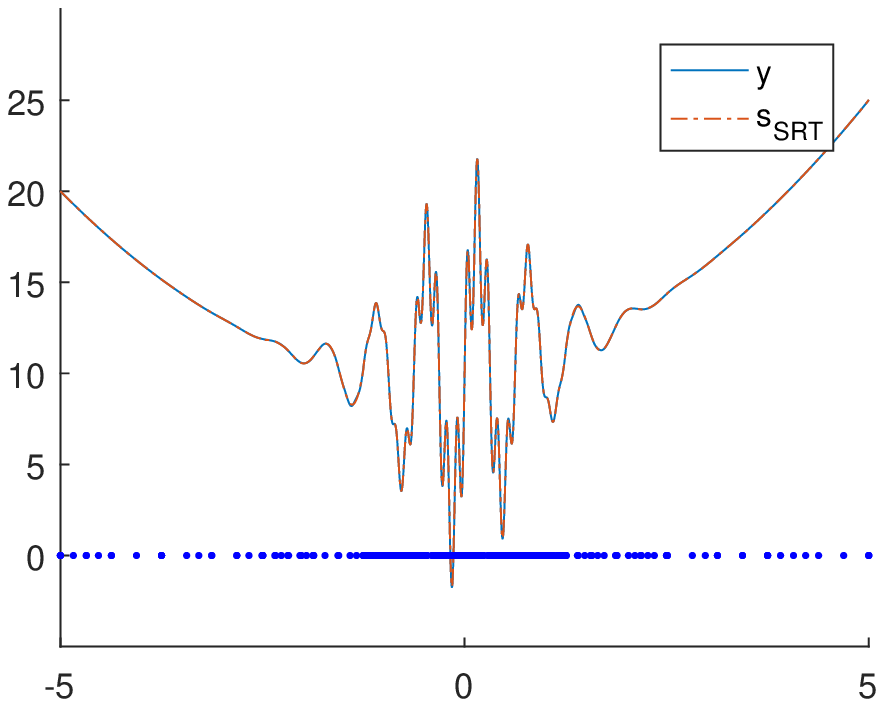}
  \includegraphics[width=0.49\textwidth]{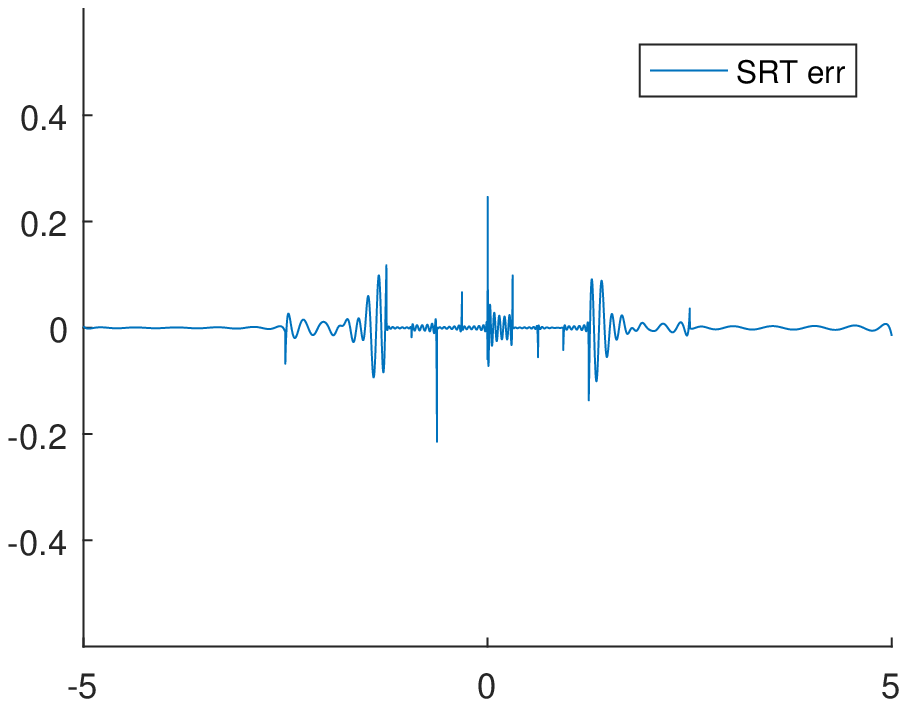}
\end{minipage}
\caption{\itshape\small{The target function $y=10+\frac{x}{2}+\frac{x^2}{2}+8e^{-\frac{7x^2}{10}} \sin(10x)+4e^{-2x^2}\sin(50x),~x\in[-5,5]$, the equally spaced dataset $X=\{\frac{10i}{999}-5\}_{i=0}^{999}$. Left: the original function $y$, the SRT prediction $s_{\mathrm{SRT}}$ with the expected RAE $\epsilon_{\mathrm{E}}=0.01$, and all the SRT centers ($\cdot$). Right: the error $y-s_{\mathrm{SRT}}$.}}
\label{SRTF:fig:4}
\end{figure}

The partition of unity is also one of the methods to address this issue. By introducing appropriate overlapping domains and rapidly decaying weight functions, the boundary effect of the error can be alleviated to some extent. However, since the overlapping domains usually cannot be too small and the depth of the tree is often not small, its time and space costs are significantly higher than $\mathcal{O}(N\log_2N)$. Instead, sparse residual forests still have the same cost as SRTs. And it provides even better performance than the partition of unity based method in terms of accuracy.

\section{Sparse residual forest}
\label{SRTF:s4}

Sparse residual forest (SRF) is a combination of SRT predictors with different tree decompositions. It provides an opportunity to avoid those predictions near the boundary and then use the average value of the remaining predictions to enhance both stability and convergence. First, we introduce a random splitting for SRTs. It can help generate random tree decompositions.

\subsection{Random binary splitting}
\label{SRTF:s4:1}

To get a random splitting, we only need to replace the median with a random percentile in \eqref{SRTF:eq:split1}. Let $p_r$ be a randomly selected integer from $37$ to $62$ inclusive, then $c'$ can be redefined as
\begin{equation*}
  c'=\textrm{percentile}(P_{\vec{n}'}(X'),p_r),
\end{equation*}
where $\textrm{percentile}(Z,p_r)$ denotes the percentile of the values in a data vector $Z$ for the percentage $p_r$. Note that $0.618$ is the golden ratio and this method depends on the values of a random vector sampled independently and with the same distribution.

\subsection{SRF prediction}

Suppose $n_t$ is the number of SRTs in the SRF, we usually apply the equal splitting to generate the first SRT and the random splitting to create the remaining $n_t-1$ SRTs. SRF helps us to avoid those predictions with large squared deviations and to use the average value of the remaining predictions to enhance both stability and convergence.

For any $x\in\Omega$, let $s_{\textrm{SRT}}^{(i)}(x)$ be the $i$th SRT prediction ($1\leqslant i\leqslant n_t$), then the squared deviation
\begin{equation*}
  \sigma_i^2(x)=\left(s_{\textrm{SRT}}^{(i)}(x)-
  \frac{1}{n_t}\sum_{j=1}^{n_t}s_{\textrm{SRT}}^{(j)}(x)\right)^2,
\end{equation*}
further, let the indicator set
\begin{equation*}
  I_F=\left\{1\leqslant i\leqslant n_t:\sigma_i^2(x)
  <\frac{1}{n_t}\sum_{j=1}^{n_t}\sigma_j^2(x)\right\},
\end{equation*}
then the SRF prediction
\begin{equation}\label{SRTF:eq:SRF}
  s_{\textrm{SRF}}(x)=\frac{1}{n_{I_F}}\sum_{i\in I_F}s_{\textrm{SRT}}^{(i)}(x),
  ~~\textrm{where}~n_{I_F}~\textrm{is the size of}~I_F.
\end{equation}
The indicator set $I_F$ here is used to avoid those predictions near the boundaries. In practice, as shown in Fig. \ref{SRTF:fig:5}, SRFs composed of a small number of SRTs perform quite well than individual SRTs; and in theory, similar to random forests \cite{BreimanL2001A_RandomForest}, the error for SRFs converges with probability $1$ to a limit as $n_t$ becomes large, see Fig. \ref{SRTF:fig:6} for examples and subsection \ref{SRTF:s5:3} for details.

Although SRF predictions usually have smaller errors when the SRT number $n_t$ is larger, we usually do not recommend choosing a large $n_t$, which means $n_t$ times the storage and computational cost.

\begin{figure}[!htb]
\centering
\begin{minipage}{1.0\textwidth}
  \includegraphics[width=0.49\textwidth]{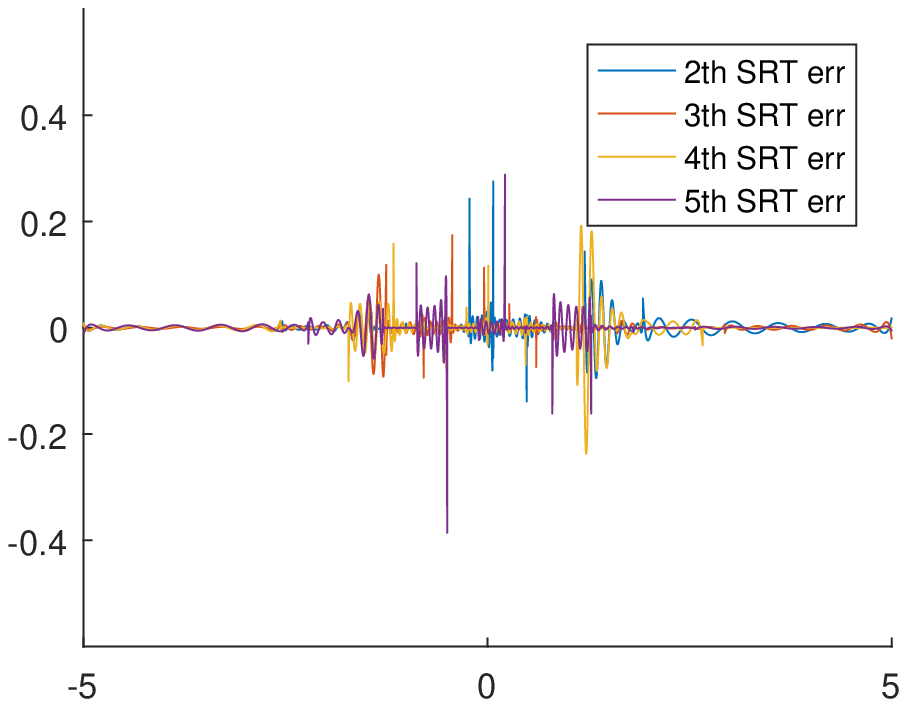}
  \includegraphics[width=0.49\textwidth]{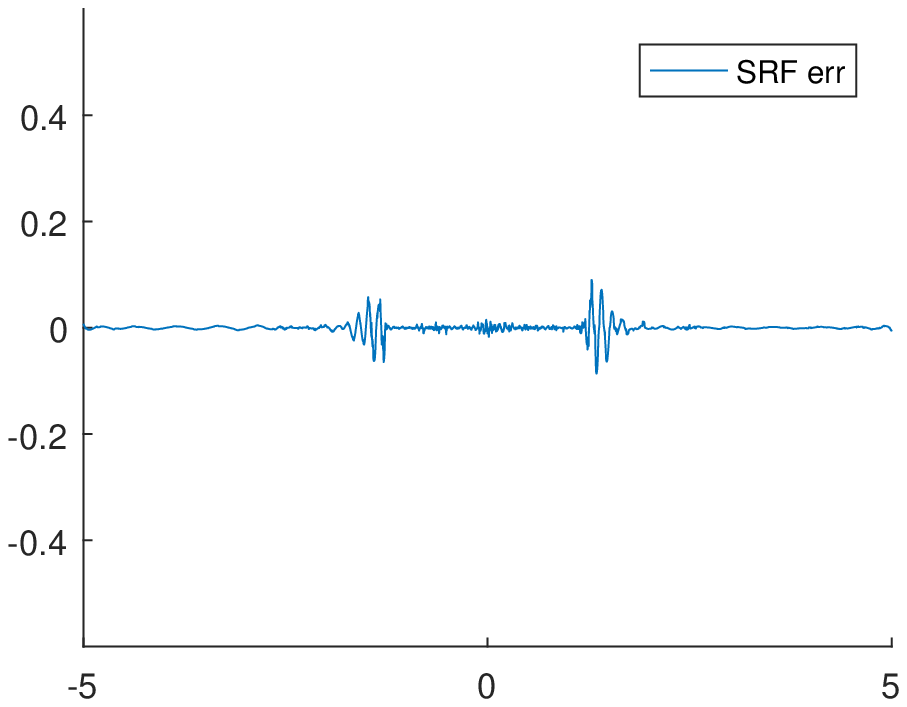}
\end{minipage}
\caption{\itshape\small{A SRF of $5$ SRTs with the expected RAE $\epsilon_{\mathrm{E}}=0.01$ for the example in Fig. \ref{SRTF:fig:4}. Left: the errors of the remaining $4$ SRTs. Right: the error of the SRF prediction.}}
\label{SRTF:fig:5}
\end{figure}
\begin{figure}[!htb]
\centering
\begin{minipage}{1.0\textwidth}
  \includegraphics[width=0.49\textwidth]{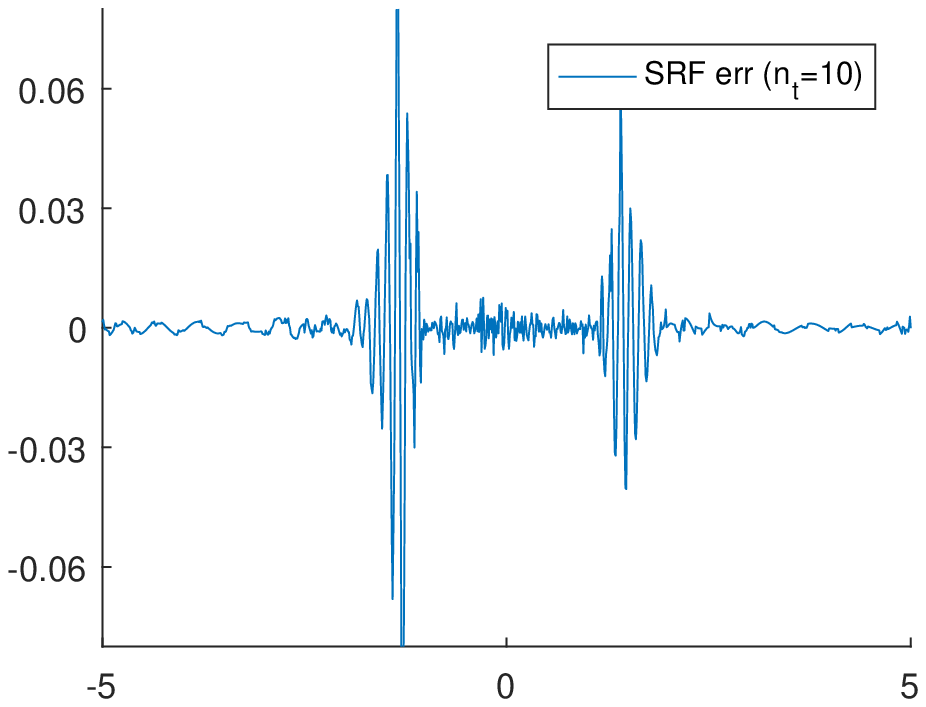}
  \includegraphics[width=0.49\textwidth]{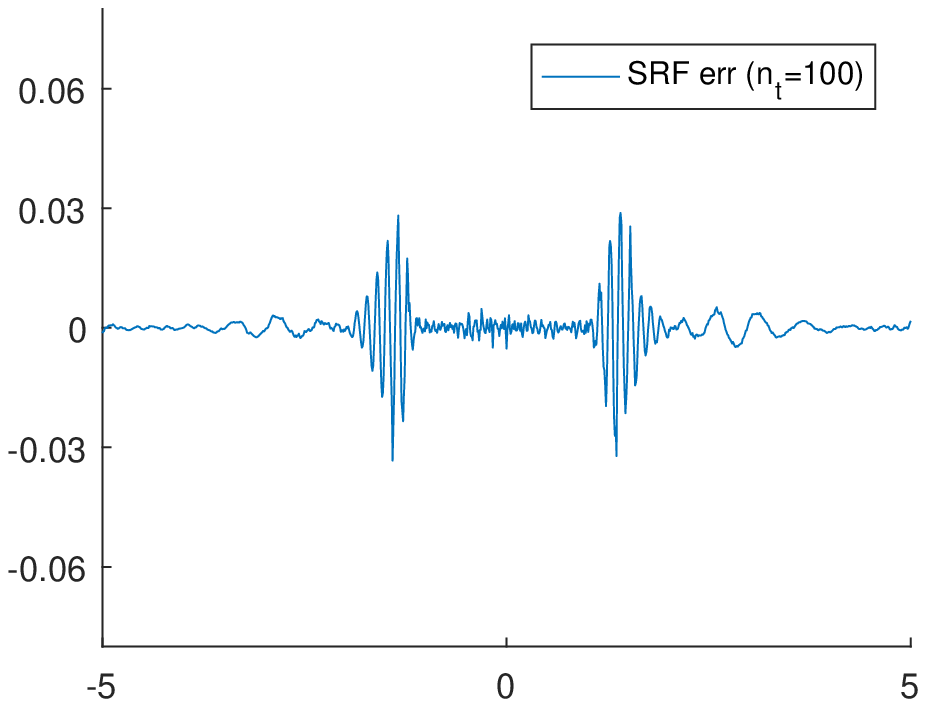}
\end{minipage}
\caption{\itshape\small{The errors of SRF predictions with different values of $n_t$ and the same expected RAE $\epsilon_{\mathrm{E}}=0.01$ for the example in Fig. \ref{SRTF:fig:4}.}}
\label{SRTF:fig:6}
\end{figure}

\section{Theory}
\label{SRTF:s5}

\subsection{Stability properties}

Suppose $\Omega_{L-1}$ is a leaf node, that is at the lowest level in a SRT, and $L$ levels of approximation, then there exists a domain sequences $\Omega_0\supset\Omega_1\supset\cdots \supset\Omega_{L-1}$ and a relevant dataset sequences $X_0\supset X_1\supset\cdots\supset X_{L-1}$ with relevant sizes $N_0>N_1>\cdots>N_{L-1}$ and shape parameters $\delta_0<\delta_1<\cdots<\delta_{L-1}$, where $\Omega_0=\Omega$ is convex, $X_0=X$ and $N_0=N$; and then, the SRT prediction of the target function $f$ is
\begin{equation}\label{SRTF:eq:SRT}
  s_{\mathrm{SFT}}(x)=\sum_{i=0}^{L-1}s_i(x),~~\forall x\in\Omega_{L-1},
\end{equation}
and the final residual
\begin{equation}\label{SRTF:eq:RSRT}
  r_L(x)=f(x)-s_{\mathrm{SFT}}(x),~~\forall x\in\Omega_{L-1},
\end{equation}
where $s_i(x)=\sum_{j=1}^{N'_i}\alpha_i^{(j)}G_{\delta_i}(x-\chi_i^{(j)})\in \mathcal{N}_{G_{\delta_i}}(\Omega_i)$ is the LS approximation of the residual $r_i(X_i)$ with respect to the centers $\chi_i=\{\chi_i^{(j)}\}_{j=1}^{N'_i}\in X_i$, and $r_{i+1}=r_i-s_i$ with $r_0=f$. Then, for any $1\leqslant i\leqslant L-1$, it follows that
\begin{equation*}
  \big(s_i(X_i),r_{i+1}(X_i)\big)_{\ell_2}=0~~\textrm{and}~~
  \alpha_i=R_i^{-1}Q_i^\mathrm{T}r_i(X_i)=R_i^{-1}Q_i^\mathrm{T}s_i(X_i),
\end{equation*}
where $Q_iR_i$ is the QR decomposition of the current matrix $\Phi_i=\Phi_{X_i,\chi_i}$ generated by the kernel $G_{\delta_i}$. If $\tau_i$ is the smallest singular value of $R_i$, then
\begin{equation}\label{SRTF:eq:Ti}
  \|\alpha_i\|_2\leqslant\tau_i^{-1}\|s_i(X_i)\|_2.
\end{equation}
According to the orthogonality of $s_i(X_i)$ and $r_{i+1}(X_i)$, we can obtain the following recurrence relations
\begin{equation*}
  \|r_i(X_i)\|_2^2=\|r_{i+1}(X_i)\|_2^2+\|s_i(X_i)\|_2^2,
  ~~0\leqslant i\leqslant L-1,
\end{equation*}
and
\begin{equation*}
  \|r_i(X_{i-1})\|_2^2>\|r_i(X_i)\|_2^2,~~1\leqslant i\leqslant L,
\end{equation*}
thus, it follows that
\begin{equation*}
  \|f_X\|_2^2=\|s_0(X_0)\|_2^2+\|r_1(X_0)\|_2^2
  >\sum_{i=0}^{L-1}\|s_i(X_i)\|_2^2+\|r_L(X_{L-1})\|_2^2.
\end{equation*}
Together with \eqref{SRTF:eq:Ti}, we proved the following theorem.
\begin{thm}\label{SRTF:thm:Cbound}
Suppose $s_\mathrm{SFT}$ is a SRT prediction of a function $f$ on a leaf node $\Omega_{L-1}\subset\Omega$ with respect to the data $(X,f_X)$, as defined in \eqref{SRTF:eq:SRT}. Let $\alpha_i$ be the coefficients of the $i$th level LS approximation $s_i$, then
\begin{equation*}
  \sum_{i=0}^{L-1}\|\alpha_i\|_2\leqslant\tau^{-1}\cdot\|f_X\|_2,
\end{equation*}
where $\tau=\min_{1\leqslant i\leqslant L-1}\tau_i$ and the constants $\tau_i$ comes from \eqref{SRTF:eq:Ti}.
\end{thm}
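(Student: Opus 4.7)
The proof plan is to chain together three ingredients that are already laid out in the buildup to the theorem: the per-level stability bound \eqref{SRTF:eq:Ti}, the LS orthogonality of $s_i(X_i)$ and $r_{i+1}(X_i)$, and the monotonicity that comes from restricting an $\ell_2$ norm from $X_{i-1}$ down to the subset $X_i$. The final inequality is then essentially a telescoping collapse along the level sequence $0\leqslant i\leqslant L-1$.

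First, I would invoke \eqref{SRTF:eq:Ti} at each level to get $\|\alpha_i\|_2\leqslant\tau_i^{-1}\|s_i(X_i)\|_2\leqslant\tau^{-1}\|s_i(X_i)\|_2$, with $\tau=\min_i\tau_i$ absorbed as a common factor. This immediately reduces the task to an upper bound of the form $\sum_{i}\|s_i(X_i)\|_2\lesssim\|f_X\|_2$, i.e., it remains to control the cumulative sizes of the level-$i$ LS fits in terms of the root-level data norm alone.

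Next, I would establish the Pythagorean relation $\|r_i(X_i)\|_2^2=\|r_{i+1}(X_i)\|_2^2+\|s_i(X_i)\|_2^2$ from the normal equations of the LS problem at level $i$ (which force $\Phi_i^\mathrm{T}(s_i(X_i)-r_i(X_i))=0$, hence $(s_i(X_i),r_{i+1}(X_i))_{\ell_2}=0$). I would then combine this with the simple observation $\|r_{i+1}(X_{i+1})\|_2\leqslant\|r_{i+1}(X_i)\|_2$, which holds because $X_{i+1}\subset X_i$ and the $\ell_2$-norm over fewer sample points can only decrease. Substituting the second into the first gives
\begin{equation*}
\|s_i(X_i)\|_2^2\;\leqslant\;\|r_i(X_i)\|_2^2-\|r_{i+1}(X_{i+1})\|_2^2,
\end{equation*}
so that telescoping from $i=0$ to $i=L-1$ with $r_0=f$ collapses the sum to $\|f_X\|_2^2-\|r_L(X_{L-1})\|_2^2\leqslant\|f_X\|_2^2$, matching the chain of inequalities already displayed just before the theorem.

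The main obstacle I expect is the last bookkeeping step: the telescoping naturally produces the square-summed bound $\sum_{i=0}^{L-1}\|s_i(X_i)\|_2^2\leqslant\|f_X\|_2^2$, whereas the conclusion is written with first powers, $\sum_i\|\alpha_i\|_2\leqslant\tau^{-1}\|f_X\|_2$. I would therefore double-check whether the intended reading is as a bound on the stacked coefficient vector (so that $\sum_i\|\alpha_i\|_2^2=\|\alpha\|_2^2$, making the square-sum route exact), or whether a Cauchy–Schwarz pass of the form $\sum_i\|s_i(X_i)\|_2\leqslant\sqrt{L}\bigl(\sum_i\|s_i(X_i)\|_2^2\bigr)^{1/2}$ is being silently used and folded into $\tau^{-1}$; either way the proof is the same in content, and the content is precisely the three-step chain above. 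The rest of the argument is routine arithmetic that I would not grind through in the write-up.
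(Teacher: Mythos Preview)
Your proposal is correct and follows exactly the paper's route: the paper derives the Pythagorean identity from LS orthogonality, the restriction monotonicity $\|r_i(X_{i-1})\|_2^2>\|r_i(X_i)\|_2^2$, telescopes to $\sum_i\|s_i(X_i)\|_2^2<\|f_X\|_2^2$, and then concludes with a one-line ``Together with \eqref{SRTF:eq:Ti}, we proved the following theorem.'' The bookkeeping obstacle you flag---that the telescoping yields a square-sum bound whereas the statement is phrased in first powers---is not addressed in the paper either; the argument as written literally delivers $\bigl(\sum_i\|\alpha_i\|_2^2\bigr)^{1/2}\leqslant\tau^{-1}\|f_X\|_2$, so your caution there is well placed and in fact sharper than the paper's own treatment.
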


Note that this theorem obviously holds for our SRTs with sparsification processes introduced in subsection \ref{SRTF:s3:1}. And now we can prove the following theorem.
\begin{thm}\label{SRTF:thm:WNbound}
Under the supposition of Theorem \ref{SRTF:thm:Cbound}. For all $1\leqslant p<\infty$, $k\in\mathbb{N}_0$, $\delta\geqslant\delta_{L-1}$, and any leaf node $\Omega_{L-1}$ of the prediction $s_{\mathrm{SRT}}$, it holds that
\begin{equation*}
  |s_\mathrm{SFT}|_{W_p^k(\Omega_{L-1})}\leqslant C_W\cdot\tau^{-1}\cdot\|f_X\|_2
  ~~\textrm{and}~~\|s_\mathrm{SFT}\|_{\mathcal{N}_{G_\delta}(\Omega_{L-1})}
  \leqslant C_{\mathcal{N}}\cdot\tau^{-1}\cdot\|f_X\|_2,
\end{equation*}
where the constant $\tau$ comes from Theorem \ref{SRTF:thm:Cbound}, the constant $C_W$ depends only on $\delta_0,\delta_{L-1},d,p$ and $k$, and the constant $C_{\mathcal{N}}$ depends only on $\delta_{L-1}$ and $d$.
\end{thm}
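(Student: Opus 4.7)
The plan is to reduce Theorem~\ref{SRTF:thm:WNbound} to Theorem~\ref{SRTF:thm:Cbound} by a level-by-level decomposition of $s_{\mathrm{SRT}}=\sum_{i=0}^{L-1}s_i$ from~\eqref{SRTF:eq:SRT}. For each $i$ I aim to prove per-level bounds of the form $|s_i|_{W_p^k(\Omega_{L-1})}\le C_1\|\alpha_i\|_2$ and $\|s_i\|_{\mathcal{N}_{G_\delta}(\Omega_{L-1})}\le C_2\|\alpha_i\|_2$ with $C_1,C_2$ uniform in $i$ and structurally of the advertised form. Summing over $i$ and invoking $\sum_{i=0}^{L-1}\|\alpha_i\|_2\le\tau^{-1}\|f_X\|_2$ from Theorem~\ref{SRTF:thm:Cbound} then closes the argument.

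For the native space bound, the monotonicity of the Fourier weight $e^{\|\omega\|_2^2/(4\delta^2)}$ in $\delta$ gives $\|s_i\|_{\mathcal{N}_{G_\delta}(\Omega_{L-1})}\le\|s_i\|_{\mathcal{N}_{G_{\delta_i}}(\mathbb{R}^d)}$ whenever $\delta\ge\delta_i$, which is granted by $\delta\ge\delta_{L-1}\ge\delta_i$. The reproducing-kernel identity then reads
\[
\|s_i\|_{\mathcal{N}_{G_{\delta_i}}(\mathbb{R}^d)}^2=\alpha_i^{\mathrm T}K_{\delta_i}(\chi_i)\alpha_i\le\lambda_{\max}\bigl(K_{\delta_i}(\chi_i)\bigr)\|\alpha_i\|_2^2,
\]
where $K_{\delta_i}(\chi_i)$ is the Gaussian Gram matrix of the $N'_i$ centers. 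A Gershgorin estimate bounds $\lambda_{\max}$ by $\max_j\sum_l e^{-\delta_i^2\|\chi_i^{(j)}-\chi_i^{(l)}\|_2^2}$, which is controlled by a constant depending only on $\delta_{L-1}$ and $d$ provided the centers are quasi-uniform with separation commensurate with $\delta_i$; this is precisely the geometric content of Section~\ref{SRTF:s3:2} together with the condition-number cap $\omega_1$ in~\eqref{SRTF:eq:centerT}. Summing in $i$ yields the second claim.

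For the Sobolev estimate at $p=2$ I would chain the native space bound with the embedding~\eqref{SRTF:GNSembedding}: $|s_i|_{W_2^k(\Omega_{L-1})}\le C_{G_{\delta_i}}^{k} k^{k/2}\|s_i\|_{\mathcal{N}_{G_{\delta_i}}(\mathbb{R}^d)}$, where $C_{G_{\delta_i}}$ is controlled uniformly by the range $[\delta_0,\delta_{L-1}]$, producing a constant of shape $C_W(\delta_0,\delta_{L-1},d,k)$. For general $p\in[1,\infty)$ I would first derive the $W_2^{k'}$ estimate as above with $k'>k+d/2$, then pass to $W_p^k(\Omega_{L-1})$ via the Sobolev embedding $W_2^{k'}(\Omega_{L-1})\hookrightarrow W_\infty^k(\Omega_{L-1})$ on the bounded Lipschitz domain $\Omega_{L-1}$ followed by H\"older, yielding $C_W$ depending only on $\delta_0,\delta_{L-1},d,p,k$.

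The main obstacle throughout is avoiding the spurious factor $\sqrt{N'_i}$ that a crude bound $\|\alpha_i\|_1\le\sqrt{N'_i}\|\alpha_i\|_2$ would introduce if one naively dominated $s_i$ by the triangle inequality over its $N'_i$ Gaussian centers; such a step would inflate both $C_W$ and $C_{\mathcal{N}}$ by a factor depending on $N$, contradicting the stated structural dependencies. The reproducing-kernel identity and the Gershgorin eigenvalue estimate bypass this issue by exploiting the stability of the Gaussian Gram matrix on the algorithm-enforced quasi-uniform centers; verifying that the geometric hypotheses used by Gershgorin are consequences of \eqref{SRTF:eq:centerT} and the construction of Section~\ref{SRTF:s3:2} is the technical heart of the proof.
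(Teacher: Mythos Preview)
Your overall level-by-level strategy and the reduction to Theorem~\ref{SRTF:thm:Cbound} match the paper's proof. The per-level estimates, however, are obtained quite differently. The paper does \emph{not} go through the reproducing-kernel Gram matrix or through the embedding~\eqref{SRTF:GNSembedding}; it simply applies the triangle inequality to the finite Gaussian sum. For the Sobolev part it writes
\[
|s_i|_{W_p^k(\Omega_{L-1})}\leqslant M_{\delta_i}^{p,k}\,\|\alpha_i\|_p\leqslant C_1 M_{\delta_i}^{p,k}\,\|\alpha_i\|_2,
\qquad
M_{\delta_i}^{p,k}=\Bigl(\textstyle\sum_{|r|=k}\|D^rG_{\delta_i}\|_{L_p(\mathbb{R}^d)}^p\Bigr)^{1/p},
\]
and for the native-space part it uses $|\hat s_i(\omega)|\leqslant\|\alpha_i\|_1\,|\widehat{G}_{\delta_i}(\omega)|$ on the Fourier side to get $\|s_i\|_{\mathcal{N}_{G_{\delta_i}}}^2\leqslant\|\alpha_i\|_1^2\int_{\mathbb{R}^d}e^{-\|\omega\|_2^2/(4\delta_i^2)}\,\ud\omega$, followed by $\|\alpha_i\|_1\leqslant C_2\|\alpha_i\|_2$. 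In other words, the paper passes through exactly the $\ell_1$/$\ell_p$ step you singled out as the ``main obstacle''; the constants $C_1,C_2$ in its proof are not shown to be independent of the center count $N'_i$, so your worry about a hidden $\sqrt{N'_i}$ factor is justified and the paper's argument is looser than its statement suggests.

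Your route is therefore a genuinely different---and in intent more careful---alternative. Two remarks. First, your Gershgorin bound $\lambda_{\max}(K_{\delta_i}(\chi_i))\leqslant C(d,\delta_{L-1})$ needs the centers to have separation on the scale $\delta_i^{-1}$; this is plausible from the quasi-uniform construction of Section~\ref{SRTF:s3:2}, but the cap~\eqref{SRTF:eq:centerT} controls the condition number of the \emph{rectangular} matrix $\Phi_{X'(I),\chi}$, not directly the center Gram matrix, so the link still has to be made precise. Second, routing the general-$p$ Sobolev bound through $W_2^{k'}(\Omega_{L-1})\hookrightarrow W_\infty^k(\Omega_{L-1})$ brings in a constant depending on the geometry of $\Omega_{L-1}$, which is not among the advertised dependencies of $C_W$; the paper's direct estimate avoids this because $\|D^rG_{\delta_i}\|_{L_p(\Omega_{L-1})}\leqslant\|D^rG_{\delta_i}\|_{L_p(\mathbb{R}^d)}$ is domain-free.
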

\begin{proof}
To prove the first inequality, observe that
\begin{align*}
  |s_i|_{W_p^k(\Omega_{L-1})}\leqslant\!\left(\sum_{|r|=k}\sum_j
  \big|\alpha_i^{(j)}\big|^p\big\|D^rG_{\delta_i}\big\|^p_{L_p(\Omega_{L-1})}
  \right)^{1/p}\!\!\leqslant M_{\delta_i}^{p,k}\|\alpha_i\|_p\leqslant C_1M_{\delta_i}^{p,k}\|\alpha_i\|_2,
\end{align*}
where $M_{\delta_i}^{p,k}=\left(\sum_{|r|=k}
\|D^rG_{\delta_i}\|^p_{L_p(\mathbb{R}^d)}\right)^{1/p}$, and for any $0\leqslant i\leqslant L-1$, $M_{\delta_i}^{p,k}<M_{\delta_{L-1}}^{p,k}$ when $k>1$; or $M_{\delta_i}^{p,k}<M_{\delta_0}^{p,k}$ when $k<1$; or $M_{\delta_i}^{p,k}=M^{p,k}$ is independent of $\delta_i$ when $k=1$. Together with Theorem \ref{SRTF:thm:Cbound}, we have
\begin{align*}
  |s_\mathrm{SFT}|_{W_p^k(\Omega_{L-1})}\leqslant&\sum_{i=0}^{L-1}
  |s_i|_{W_p^k(\Omega_{L-1})}\leqslant C_W\cdot\tau^{-1}\cdot\|f_X\|_2,
\end{align*}
where $C_W=C_1M_{\delta_{L-1}}^{p,k}$ when $k>1$, or $C_W=C_1M_{\delta_0}^{p,k}$ when $k<1$, or $C_W=C_1M^{p,k}$ when $k=1$.

To prove the second inequality, observe that for any $s_i\in \mathcal{N}_{G_{\delta_i}}(\Omega_i)$, there is a natural extension $\mathcal{E}s_i\in\mathcal{N}_{G_{\delta_i}}(\mathbb{R}^d)$ with $\|\mathcal{E}s_i\|_{\mathcal{N}_{G_{\delta_i}}(\mathbb{R}^d)}=
\|s_i\|_{\mathcal{N}_{G_{\delta_i}}(\Omega_i)}$. From the definition of native spaces of Gaussians, we see that $\mathcal{E}s_i\in\mathcal{N}_{G_\delta} (\mathbb{R}^d)$ with
\begin{equation}\label{SRTF:eq:keyNG}
  \|\mathcal{E}s_i\|_{\mathcal{N}_{G_\delta}(\mathbb{R}^d)}\leqslant
  \|\mathcal{E}s_i\|_{\mathcal{N}_{G_{\delta_i}}(\mathbb{R}^d)},
\end{equation}
where $\delta\geqslant\delta_{L-1}>\cdots>\delta_0$; and further, the restriction $\mathcal{E}s_i|\Omega_{L-1}=s_i|\Omega_{L-1}$ of $\mathcal{E}s_i$ to $\Omega_{L-1}\subseteq\Omega_i$ is contained in $\mathcal{N}_{G_\delta}(\Omega_{L-1})$ with
\begin{equation*}
  \|s_i|\Omega_{L-1}\|_{\mathcal{N}_{G_\delta}(\Omega_{L-1})}\leqslant
  \|\mathcal{E}s_i\|_{\mathcal{N}_{G_\delta}(\mathbb{R}^d)},
\end{equation*}
hence, we have $\|s_i|\Omega_{L-1}\|_{\mathcal{N}_{G_\delta}(\Omega_{L-1})} \leqslant\|\mathcal{E}s_i\|_{\mathcal{N}_{G_{\delta_i}}(\mathbb{R}^d)}$, and then
\begin{equation*}
  \|s_\mathrm{SFT}\|_{\mathcal{N}_{G_\delta}(\Omega_{L-1})}
  \leqslant\sum_{i=1}^{L-1}
  \|s_i|\Omega_{L-1}\|_{\mathcal{N}_{G_\delta}(\Omega_{L-1})}
  \leqslant\sum_{i=1}^{L-1}
  \|\mathcal{E}s_i\|_{\mathcal{N}_{G_{\delta_i}}(\mathbb{R}^d)}.
\end{equation*}

Together with Theorem \ref{SRTF:thm:Cbound} and
\begin{align*}
  \|\mathcal{E}s_i\|^2_{\mathcal{N}_{G_{\delta_i}}(\mathbb{R}^d)}
  \!=\!\int_{\mathbb{R}^d}|\hat{s}_i(\omega)|^2
  e^{\frac{\|\omega\|_2^2}{4\delta_i^2}}\ud\omega
  \leqslant\|\alpha_i\|_1^2\!
  \int_{\mathbb{R}^d}e^{-\frac{\|\omega\|_2^2}{4\delta_i^2}}\ud\omega
  \leqslant C_2^2(2\delta_{L-1})^d\pi^{d/2}\|\alpha_i\|_2^2
\end{align*}
we finally have $\|s_\mathrm{SFT}\|_{\mathcal{N}_{G_\delta}(\Omega_{L-1})}
<C_{\mathcal{N}}\cdot\tau^{-1}\cdot\|f_X\|_2$, where $C_{\mathcal{N}}=C_2(2\delta_{L-1})^{d/2}\pi^{d/4}$.
\end{proof}
\begin{rem}
See Theorems $10.46$ and $10.47$ in \cite{WendlandH2005B_ScatteredData} for details about the restriction and extension of functions from certain native spaces.
\end{rem}
\begin{rem}\label{SRTF:rem:M}
The second inequality depends on the embeddings \eqref{SRTF:eq:keyNG} of native spaces of Gaussians. As mentioned in section \ref{SRTF:s2}, the Fourier transform of the inverse multiquadrics is $\widehat{M}_\delta(\omega)=\frac{2^{1-\beta}}{\Gamma(\beta)} (\delta\|\omega\|_2)^{\beta-d/2}K_{d/2-\beta}(\|\omega\|_2/\delta)$, then for any $\beta>\frac{d}{2}$ and $\delta\geqslant\delta_i$, $\widehat{M}_\delta^{-1}(\omega) \leqslant\widehat{M}_{\delta_i}^{-1}(\omega)$, and then, for an inverse multiquadric based $s_i$,
\begin{equation}\label{SRTF:eq:keyNM}
  \|\mathcal{E}s_i\|_{\mathcal{N}_{M_\delta}(\mathbb{R}^d)}\leqslant
  \|\mathcal{E}s_i\|_{\mathcal{N}_{M_{\delta_i}}(\mathbb{R}^d)},
\end{equation}
hence, the second inequality also holds for native spaces of inverse multiquadrics.
\end{rem}

\subsection{Error estimates for SRTs}

\begin{thm}\label{SRTF:thm:EEW}
Under the supposition of Theorem \ref{SRTF:thm:Cbound}. If $f\in W_p^k(\Omega)$ and $r_L$ is the residual $f-s_{\mathrm{SRT}}$ on an arbitrary leaf node $\Omega_{L-1}$, then for any $1\!\leqslant\!q\!\leqslant\!\infty$, $\gamma\in\mathbb{N}_0^d$, and $1\!\leqslant\!p\!<\!\infty$ with $k>|\gamma|+d/p$ if $p>1$, or with $k\geqslant|\gamma|+d$ if $p=1$, it holds that
\begin{equation*}
  \|D^\gamma r_L\|_{L_q(\Omega_{L-1})}\!\leqslant\!
  C\left[h^{k-|\gamma|-\left(\frac{d}{p}-\frac{d}{q}\right)_+}\!\!
  \left(|f|_{W_p^k(\Omega)}\!+C_W\tau^{-1}\|f_X\|_2\right)\!+
  h^{-|\gamma|}\|r_L|X_L\|_\infty\right],
\end{equation*}
where $(t)_+=\max(t,0)$, the fill distance $h$ is assumed to be sufficiently small, the constant $C$ do not depend on $f,r_L$ or $h$, and the constant $C_W$ comes from Theorem \ref{SRTF:thm:WNbound}.
\end{thm}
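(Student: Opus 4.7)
The plan is to apply a Sobolev-type sampling inequality directly to the residual $r_L$ on the leaf domain $\Omega_{L-1}$, and then control the resulting Sobolev semi-norm of $r_L$ by splitting it into a contribution from $f$ and a contribution from $s_{\mathrm{SRT}}$, the latter being bounded by Theorem \ref{SRTF:thm:WNbound}.

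First, I would invoke a sampling inequality of Narcowich–Ward–Wendland–Rieger type (see \cite{NarcowichF2005A_RBFSamplingInequalities,RiegerC2010_RBFSamplingInequalities,WendlandH2005_RBFSamplingInequalities}): for a bounded domain with Lipschitz boundary satisfying an interior cone condition, whenever $k>|\gamma|+d/p$ (or $k\geqslant|\gamma|+d$ if $p=1$) and the fill distance of a discrete subset is sufficiently small, every $g\in W_p^k(\Omega_{L-1})$ obeys
\begin{equation*}
  \|D^\gamma g\|_{L_q(\Omega_{L-1})}\leqslant C\left[h^{k-|\gamma|-\left(\frac{d}{p}-\frac{d}{q}\right)_+}|g|_{W_p^k(\Omega_{L-1})}+h^{-|\gamma|}\|g|_{X_L}\|_\infty\right].
\end{equation*}
The domain $\Omega_{L-1}$ is, by construction in Section \ref{SRTF:s3:4}, the intersection of $\Omega$ with finitely many half-spaces, so it is convex; in particular it is bounded, Lipschitz, and satisfies an interior cone condition, making it admissible for the inequality above.

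Second, with $g=r_L=f-s_{\mathrm{SRT}}$, I would apply the triangle inequality for the $W_p^k$ semi-norm together with the monotonicity of this semi-norm under domain restriction, obtaining
\begin{equation*}
  |r_L|_{W_p^k(\Omega_{L-1})}\leqslant|f|_{W_p^k(\Omega_{L-1})}+|s_{\mathrm{SRT}}|_{W_p^k(\Omega_{L-1})}\leqslant|f|_{W_p^k(\Omega)}+C_W\tau^{-1}\|f_X\|_2,
\end{equation*}
where the second estimate invokes Theorem \ref{SRTF:thm:WNbound} directly on the leaf node. Substituting this into the sampling inequality and absorbing the universal constant into $C$ gives the stated bound.

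The main obstacle I anticipate is justifying the hypotheses of the sampling inequality uniformly across leaves: one must argue that the fill distance of $X_L$ in $\Omega_{L-1}$ is controlled (which is precisely why the termination parameter $\omega_4$ of Section \ref{SRTF:s3:4} forces each leaf to retain a prescribed fraction of the average center count $\bar n_c$, preventing pathologically sparse leaves), and that the cone-condition constants entering $C$ for each $\Omega_{L-1}$ can be bounded uniformly so that $C$ does not degenerate as the tree grows. A secondary but essentially technical point is the sharp threshold $k>|\gamma|+d/p$ versus $k\geqslant|\gamma|+d$ for $p=1$, which corresponds to the Sobolev embedding $W_p^k(\Omega_{L-1})\hookrightarrow C(\Omega_{L-1})$ required to make the pointwise term $\|r_L|_{X_L}\|_\infty$ meaningful; this is standard and can be quoted from the cited references without further work.
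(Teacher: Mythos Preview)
Your proposal is correct and follows essentially the same route as the paper: apply the Wendland--Rieger sampling inequality (Theorem~2.6 of \cite{WendlandH2005_RBFSamplingInequalities}) to $r_L$ on $\Omega_{L-1}$, then bound $|r_L|_{W_p^k(\Omega_{L-1})}\leqslant|f|_{W_p^k(\Omega)}+|s_{\mathrm{SRT}}|_{W_p^k(\Omega_{L-1})}$ and invoke Theorem~\ref{SRTF:thm:WNbound}. Your additional remarks about uniformity of the cone-condition constants across leaves and the role of $\omega_4$ go beyond what the paper's own proof addresses, but the core argument is identical.
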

\begin{proof}
According to the sampling inequality for functions from certain Sobolev spaces on a bounded domain (see Theorem $2.6$ in \cite{WendlandH2005_RBFSamplingInequalities}), we have
\begin{equation*}
  \|D^\gamma r_L\|_{L_q(\Omega_{L-1})}\!\leqslant\!
  C\left(h^{k-|\gamma|-\left(\frac{d}{p}-\frac{d}{q}\right)_+}
  |r_L|_{W_p^k(\Omega_{L-1})}\!+h^{-|\gamma|}\|r_L|X_L\|_\infty\right),
\end{equation*}
and further,
\begin{equation*}
  |r_L|_{W_p^k(\Omega_{L-1})}=|f-s_{\mathrm{SRT}}|_{W_p^k(\Omega_{L-1})}
  \leqslant|f|_{W_p^k(\Omega)}+|s_{\mathrm{SRT}}|_{W_p^k(\Omega_{L-1})}.
\end{equation*}
Applying the first inequality of Theorem \ref{SRTF:thm:WNbound} finishes the proof.
\end{proof}

This result also explains how the matrix $\Phi_i$ at each level affects the convergence. It is worth noting that this proof does not depend on the radial basis functions, so the next observation is an immediate consequence.
\begin{cor}
The result of Theorem \ref{SRTF:thm:EEW} holds for arbitrary basis functions based SRTs provided those basis functions belongs to $W_p^k(\Omega)$.
\end{cor}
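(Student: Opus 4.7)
The plan is to audit the proof of Theorem \ref{SRTF:thm:EEW} and locate precisely where the Gaussian structure of the basis enters. That proof rests on two ingredients: (i) the generic Sobolev sampling inequality from \cite{WendlandH2005_RBFSamplingInequalities}, which controls $\|D^\gamma r_L\|_{L_q(\Omega_{L-1})}$ by $|r_L|_{W_p^k(\Omega_{L-1})}$ and $\|r_L|X_L\|_\infty$; and (ii) the triangle-inequality split $|r_L|_{W_p^k(\Omega_{L-1})}\leqslant|f|_{W_p^k(\Omega)}+|s_{\mathrm{SRT}}|_{W_p^k(\Omega_{L-1})}$ followed by the first inequality of Theorem \ref{SRTF:thm:WNbound}. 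Step (i) is completely basis-independent, and step (ii) decomposes into a basis-independent triangle inequality and a basis-dependent seminorm bound on $s_{\mathrm{SRT}}$. Hence the only thing to re-verify for the corollary is the analogue of the first inequality of Theorem \ref{SRTF:thm:WNbound} for arbitrary basis functions.

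Next I would re-derive that analogue. Writing each level approximation as $s_i=\sum_{j=1}^{N'_i}\alpha_i^{(j)}\phi_i^{(j)}$ with arbitrary $\phi_i^{(j)}\in W_p^k(\Omega)$ centered at $\chi_i^{(j)}$, subadditivity of the seminorm together with the hypothesis of the corollary gives
\[
  |s_i|_{W_p^k(\Omega_{L-1})}\leqslant\sum_{j=1}^{N'_i}|\alpha_i^{(j)}|\,|\phi_i^{(j)}|_{W_p^k(\Omega)}\leqslant M_i\,\|\alpha_i\|_1\leqslant M_i\sqrt{N'_i}\,\|\alpha_i\|_2,
\]
where $M_i:=\max_{j}|\phi_i^{(j)}|_{W_p^k(\Omega)}$ and the last inequality is Cauchy--Schwarz on $\mathbb{R}^{N'_i}$. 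Summing over $i=0,\ldots,L-1$ and invoking Theorem \ref{SRTF:thm:Cbound} yields $|s_{\mathrm{SRT}}|_{W_p^k(\Omega_{L-1})}\leqslant C'_W\cdot\tau^{-1}\cdot\|f_X\|_2$ with $C'_W:=\max_{0\leqslant i\leqslant L-1}M_i\sqrt{N'_i}$. Substituting this into step (ii) of the original proof reproduces exactly the inequality of Theorem \ref{SRTF:thm:EEW} with $C_W$ replaced by $C'_W$.

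The principal obstacle is not analytic but qualitative: the constant $C'_W$ requires uniform finiteness of $|\phi_i^{(j)}|_{W_p^k(\Omega)}$ across every level $i$ and every center $j$, which is exactly what the hypothesis \emph{the basis functions belong to} $W_p^k(\Omega)$ guarantees. For Gaussians this was automatic because the basis functions at level $i$ are translates of a single template and the relevant seminorm is monotone in $\delta_i$, allowing the proof of Theorem \ref{SRTF:thm:WNbound} to express $C_W$ in closed form as $C_1M_{\delta_{L-1}}^{p,k}$ or $C_1M_{\delta_0}^{p,k}$ (or a $\delta$-free constant when $k=1$). For a general basis we can only assert $C'_W<\infty$, without a clean closed form. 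Apart from this bookkeeping no additional difficulty appears, which is precisely why the statement is recorded as a corollary rather than as an independent theorem.
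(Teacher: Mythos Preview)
Your proposal is correct and follows exactly the route the paper intends: the paper does not give a separate proof of the corollary at all but simply remarks that the argument for Theorem \ref{SRTF:thm:EEW} ``does not depend on the radial basis functions,'' and your audit makes this precise by checking that only the first inequality of Theorem \ref{SRTF:thm:WNbound} uses the Gaussian form and then re-deriving that bound for general $\phi_i^{(j)}\in W_p^k(\Omega)$. Your version is in fact more careful than the paper's, since you track the resulting constant $C'_W=\max_i M_i\sqrt{N'_i}$ explicitly rather than leaving it implicit.
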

It shows that a SRT, whose basis functions are differentiable and have bounded derivatives on $\Omega$ (regardless of polynomials, trigonometric polynomials, radial basis functions), leads to algebraic convergence orders for finitely smooth target functions. For infinitely smooth target functions, the following theorem shows that the Gaussian based SRT leads to exponential convergence orders.
\begin{thm}\label{SRTF:thm:EENG}
Under the supposition of Theorem \ref{SRTF:thm:Cbound}. If $f\in\mathcal{N}_{G_\delta}(\Omega)$ and $r_L$ is the residual $f-s_{\mathrm{SRT}}$ on an arbitrary leaf node $\Omega_{L-1}$, then for any $1\!\leqslant\! q\!\leqslant\!\infty$, $\gamma\in\mathbb{N}_0^d$, and $\delta>\delta_{L-1}$, there are constants $C$ and $h_0$ such that for all $h\leqslant h_0$, it holds that
\begin{equation*}
  \|D^\gamma r_L\|_{L_q(\Omega_{L-1})}\!\leqslant\!
  e^{C\log(h)/\sqrt{h}}\left(\|f\|_{\mathcal{N}_{G_\delta}(\Omega)}+
  C_{\mathcal{N}}\tau^{-1}\|f_X\|_2\right)
  +C'h^{-|\gamma|}\|r_L|X_L\|_\infty,
\end{equation*}
where the constant $C$ depends only on the geometry of $\Omega_{L-1}$, $h_0$ may depend on $d,p,q,\gamma$ and the geometry of $\Omega_{L-1}$ but not on $h$ or $f$, $C'$ do not depend on $h$ or $r_L$, and the constant $C_{\mathcal{N}}$ comes from Theorem \ref{SRTF:thm:WNbound}.
\end{thm}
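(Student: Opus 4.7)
The approach is to follow the template of the proof of Theorem \ref{SRTF:thm:EEW} but to replace the Sobolev sampling inequality by its Gaussian--native--space analogue, which delivers exponential rather than algebraic rates. The three ingredients I will combine are: a sampling inequality on $\Omega_{L-1}$ for functions in $\mathcal{N}_{G_\delta}(\Omega_{L-1})$, the triangle inequality $\|r_L\|_{\mathcal{N}_{G_\delta}(\Omega_{L-1})} \leqslant \|f\|_{\mathcal{N}_{G_\delta}(\Omega_{L-1})} + \|s_{\mathrm{SRT}}\|_{\mathcal{N}_{G_\delta}(\Omega_{L-1})}$, and the second bound of Theorem \ref{SRTF:thm:WNbound} to control the native-space norm of the predictor.

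First I would invoke a sampling inequality of the type established in \cite{RiegerC2010_RBFSamplingInequalities} for Gaussian native spaces on a bounded Lipschitz domain satisfying an interior cone condition. Applied to $r_L$ on $\Omega_{L-1}$, this supplies constants $C$ (depending only on the geometry of $\Omega_{L-1}$) and $h_0$ (possibly depending on $d,p,q,\gamma$ and the geometry of $\Omega_{L-1}$) such that, for every $h \leqslant h_0$,
\begin{equation*}
\|D^\gamma r_L\|_{L_q(\Omega_{L-1})} \leqslant e^{C\log(h)/\sqrt{h}}\,\|r_L\|_{\mathcal{N}_{G_\delta}(\Omega_{L-1})} + C'h^{-|\gamma|}\|r_L|X_L\|_\infty.
\end{equation*}
This shifts the burden to estimating the native-space norm of the residual.

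Next, using the triangle inequality in $\mathcal{N}_{G_\delta}(\Omega_{L-1})$,
\begin{equation*}
\|r_L\|_{\mathcal{N}_{G_\delta}(\Omega_{L-1})} \leqslant \|f\|_{\mathcal{N}_{G_\delta}(\Omega_{L-1})} + \|s_{\mathrm{SRT}}\|_{\mathcal{N}_{G_\delta}(\Omega_{L-1})}.
\end{equation*}
The first term is bounded by $\|f\|_{\mathcal{N}_{G_\delta}(\Omega)}$ via the natural restriction map from $\mathcal{N}_{G_\delta}(\Omega)$ to $\mathcal{N}_{G_\delta}(\Omega_{L-1})$ (Theorems $10.46$--$10.47$ of \cite{WendlandH2005B_ScatteredData}, as already noted in the remark following Theorem \ref{SRTF:thm:WNbound}). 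The second term lies exactly within the scope of the second inequality of Theorem \ref{SRTF:thm:WNbound}: the assumption $\delta > \delta_{L-1} \geqslant \delta_i$ for all $i$ makes the key embedding \eqref{SRTF:eq:keyNG} available and yields $\|s_{\mathrm{SRT}}\|_{\mathcal{N}_{G_\delta}(\Omega_{L-1})} \leqslant C_{\mathcal{N}}\tau^{-1}\|f_X\|_2$. Substituting these two bounds into the sampling inequality delivers the stated estimate.

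The main obstacle is invoking a Gaussian--native--space sampling inequality that produces precisely the factor $e^{C\log(h)/\sqrt{h}}$ on a Lipschitz domain with interior cone condition; once this is in hand, the argument is structurally identical to that of Theorem \ref{SRTF:thm:EEW}. A minor bookkeeping point is ensuring that the dependencies of $C$, $C'$, and $h_0$ match those claimed in the statement, which follows verbatim from the cited sampling inequality and from the fact that $C_{\mathcal{N}}$ in Theorem \ref{SRTF:thm:WNbound} depends only on $\delta_{L-1}$ and $d$.
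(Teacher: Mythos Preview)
Your proposal is correct and mirrors the paper's proof essentially step for step: the paper applies the Gaussian native-space sampling inequality from \cite{RiegerC2010_RBFSamplingInequalities} (Theorems 3.5 and 7.5) to $r_L$ on $\Omega_{L-1}$, splits $\|r_L\|_{\mathcal{N}_{G_\delta}(\Omega_{L-1})}$ by the triangle inequality, bounds the $f$-part via the restriction estimate (Theorem 10.47 of \cite{WendlandH2005B_ScatteredData}), and bounds the $s_{\mathrm{SRT}}$-part by the second inequality of Theorem \ref{SRTF:thm:WNbound}. No differences worth noting.
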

\begin{proof}
According to the sampling inequality for functions from certain native spaces of Gaussians on a bounded domain (see Theorems $3.5$ and $7.5$ in \cite{RiegerC2010_RBFSamplingInequalities}), we have
\begin{equation*}
  \|D^\gamma r_L\|_{L_q(\Omega_{L-1})}\!\leqslant\!
  e^{C\log(h)/\sqrt{h}}\|r_L\|_{\mathcal{N}_{G_\delta}(\Omega_{L-1})}
  +C'h^{-|\gamma|}\|r_L|X_L\|_\infty,
\end{equation*}
and further,
\begin{equation*}
  \|r_L\|_{\mathcal{N}_{G_\delta}(\Omega_{L-1})}
  =\|f-s_{\mathrm{SRT}}\|_{\mathcal{N}_{G_\delta}(\Omega_{L-1})}
  \leqslant\|f|\Omega_{L-1}\|_{\mathcal{N}_{G_\delta}(\Omega_{L-1})}
  +\|s_{\mathrm{SRT}}\|_{\mathcal{N}_{G_\delta}(\Omega_{L-1})},
\end{equation*}
where $f|\Omega_{L-1}$ is the restriction of $f$ to $\Omega_{L-1}$ with $\|f|\Omega_{L-1}\|_{\mathcal{N}_{G_\delta}(\Omega_{L-1})}\leqslant
\|f\|_{\mathcal{N}_{G_\delta}(\Omega)}$ (see Theorem $10.47$ in \cite{WendlandH2005B_ScatteredData}); and applying the second inequality of Theorem \ref{SRTF:thm:WNbound} finishes the proof.
\end{proof}

Similarly, according to Remark \ref{SRTF:rem:M} and the sampling inequality for functions from certain native spaces of Gaussians on a bounded domain (see Theorems $3.5$ and $7.6$ in \cite{RiegerC2010_RBFSamplingInequalities}), we can also prove the convergence for the inverse multiquadric based SRTs.
\begin{thm}\label{SRTF:thm:EENM}
Under the supposition of Theorem \ref{SRTF:thm:Cbound}. If $f\in\mathcal{N}_{M_\delta}(\Omega)$, $s_{\mathrm{SRT}}$ is based on inverse multiquadrics, and $r_L$ is the residual $f-s_{\mathrm{SRT}}$ on an arbitrary leaf node $\Omega_{L-1}$, then for any $1\!\leqslant\! q\!\leqslant\!\infty$, $\gamma\in\mathbb{N}_0^d$, and $\delta>\delta_{L-1}$, there are constants $C$ and $h_0$ such that for all $h\leqslant h_0$, it holds that
\begin{equation*}
  \|D^\gamma r_L\|_{L_q(\Omega_{L-1})}\!\leqslant\!
  e^{-\frac{C}{\sqrt{h}}}\left(\|f\|_{\mathcal{N}_{M_\delta}(\Omega)}+
  C_{\mathcal{N}}\tau^{-1}\|f_X\|_2\right)
  +C'h^{-|\gamma|}\|r_L|X_L\|_\infty,
\end{equation*}
where the constants $C$ and $h_0>0$ depends only on $d,p,q,\gamma$ and the geometry of $\Omega_{L-1}$, $C'$ do not depend on $h$ or $r_L$, and the constant $C_{\mathcal{N}}$ comes from Theorem \ref{SRTF:thm:WNbound}.
\end{thm}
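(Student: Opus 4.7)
The plan is to mirror the proof of Theorem~\ref{SRTF:thm:EENG} step by step, substituting at each turn the inverse-multiquadric analogue of the Gaussian tool used there. First I invoke the sampling inequality for native spaces of inverse multiquadrics on a bounded domain with interior cone condition, i.e.\ Theorems~3.5 and 7.6 of \cite{RiegerC2010_RBFSamplingInequalities}, which yields, for all $h\le h_0$,
\begin{equation*}
  \|D^\gamma r_L\|_{L_q(\Omega_{L-1})}\le e^{-C/\sqrt{h}}\,\|r_L\|_{\mathcal{N}_{M_\delta}(\Omega_{L-1})}+C'\,h^{-|\gamma|}\,\|r_L|X_L\|_\infty,
\end{equation*}
with constants $C,h_0$ depending only on $d,p,q,\gamma$ and the geometry of $\Omega_{L-1}$.

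Next I split the native-space norm via the triangle inequality together with the restriction principle for native spaces of inverse multiquadrics (Theorem~10.47 of \cite{WendlandH2005B_ScatteredData} applies in this setting as well), obtaining
\begin{equation*}
  \|r_L\|_{\mathcal{N}_{M_\delta}(\Omega_{L-1})}\le \|f\|_{\mathcal{N}_{M_\delta}(\Omega)}+\|s_{\mathrm{SRT}}\|_{\mathcal{N}_{M_\delta}(\Omega_{L-1})}.
\end{equation*}

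To control the remaining term I invoke the inverse-multiquadric version of the second inequality of Theorem~\ref{SRTF:thm:WNbound}, as announced in Remark~\ref{SRTF:rem:M}. The Gaussian proof hinges on the Fourier-side embedding \eqref{SRTF:eq:keyNG}; its role is now played by \eqref{SRTF:eq:keyNM}, namely $\|\mathcal{E}s_i\|_{\mathcal{N}_{M_\delta}(\mathbb{R}^d)}\le\|\mathcal{E}s_i\|_{\mathcal{N}_{M_{\delta_i}}(\mathbb{R}^d)}$ whenever $\delta\ge\delta_i$, which is valid because $\widehat{M}_\delta^{-1}(\omega)\le\widehat{M}_{\delta_i}^{-1}(\omega)$ pointwise in $\omega$. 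With this one substitution, the proof of Theorem~\ref{SRTF:thm:WNbound} goes through verbatim and delivers $\|s_{\mathrm{SRT}}\|_{\mathcal{N}_{M_\delta}(\Omega_{L-1})}\le C_{\mathcal{N}}\tau^{-1}\|f_X\|_2$ with $C_{\mathcal{N}}$ depending only on $\beta,d$ and $\delta_{L-1}$. Chaining the three displays then produces the claimed estimate.

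The one place where the Gaussian argument does not transfer verbatim is the final Fourier-side estimate of $\|\mathcal{E}s_i\|_{\mathcal{N}_{M_{\delta_i}}(\mathbb{R}^d)}^2=\int_{\mathbb{R}^d}|\hat s_i(\omega)|^2\widehat{M}_{\delta_i}^{-1}(\omega)\,\ud\omega$ by a multiple of $\|\alpha_i\|_2^2$. Here I would exploit that $\hat s_i(\omega)=\bigl(\sum_j\alpha_i^{(j)}e^{-\mathrm{i}\omega\cdot\chi_i^{(j)}}\bigr)\widehat{M}_{\delta_i}(\omega)$ to cancel one factor of $\widehat{M}_{\delta_i}^{-1}$, reducing the integral to $\|\alpha_i\|_1^2\int_{\mathbb{R}^d}\widehat{M}_{\delta_i}(\omega)\,\ud\omega=(2\pi)^d M_{\delta_i}(0)\,\|\alpha_i\|_1^2$, then bound $M_{\delta_i}(0)=\delta_i^{2\beta}$ uniformly in the range $\delta_0\le\delta_i\le\delta_{L-1}$ and pass from $\|\alpha_i\|_1$ to $\|\alpha_i\|_2$ at the cost of a factor depending on the (uniformly controlled) number of centers at each level. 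I regard this integral estimate as the main, still mild, obstacle; once it is dispatched, Theorem~\ref{SRTF:thm:Cbound} sums $\sum_i\|\alpha_i\|_2$ exactly as in the Gaussian case and the proof closes.
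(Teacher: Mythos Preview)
Your proposal is correct and follows essentially the same route as the paper, which in fact does not write out a proof at all: it merely says ``Similarly, according to Remark~\ref{SRTF:rem:M} and the sampling inequality \ldots\ (Theorems~3.5 and 7.6 in \cite{RiegerC2010_RBFSamplingInequalities}), we can also prove the convergence for the inverse multiquadric based SRTs.'' Your three-step outline (sampling inequality, triangle inequality plus restriction, then the inverse-multiquadric analogue of the second bound in Theorem~\ref{SRTF:thm:WNbound} via Remark~\ref{SRTF:rem:M}) is exactly the intended argument, and you have supplied more detail than the paper itself. The Fourier-side estimate you flag as a ``mild obstacle'' is not specific to the inverse-multiquadric case: the identical passage from $\|\alpha_i\|_1$ to $\|\alpha_i\|_2$ already occurs in the paper's Gaussian proof (hidden in the constant $C_2$), so your treatment of it is consistent with the paper's own level of rigor.
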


\subsection{Error estimates for SRFs}
\label{SRTF:s5:3}

For any $x\in\Omega_{L-1}\subset\Omega$, each SRT prediction $s_{\mathrm{SRT}}^{(i)}(x)$ ($1\leqslant i\leqslant n_t$) in a SRF converges to the target function $f(x)$ and satisfies relevant error estimates, thus, together with the Strong Law of Large Numbers and the Lindeberg-Levy central limit theorem, it follows that:
\begin{thm}
For any $x\in\Omega_{L-1}$, there exists an expectation $m_{\mathrm{SRF}}(x)$ such that
\begin{equation*}
  \lim_{n_t\to\infty}s_{\mathrm{SRF}}(x;n_t)
  =\lim_{n_t\to\infty}\left(\frac{1}{n_t}
  \sum_{i=1}^{n_t}s_{\mathrm{SRT}}^{(i)}(x)\right)
\end{equation*}
converges almost surely to $m_{\mathrm{SRF}}(x)$. Further, for any $1\leqslant q\leqslant\infty$ and $\gamma\in\mathbb{N}_0^d$, if $\|D^\gamma(s_{\mathrm{SRT}}^{(i)}(x)-f(x))\|_{L_q(\Omega_{L-1})}\leqslant\epsilon$, then there exists $\sigma\leqslant2\epsilon$ such that the random variables $\|D^\gamma(s_{\mathrm{SRF}}(x;n_t)- m_{\mathrm{SRF}}(x))\|_{L_q(\Omega_{L-1})}$ converge in distribution to a normal $N(0,\sigma/\sqrt{n_t})$, i.e., for any $\lambda_a>0$, the inequality
\begin{equation*}
  \left\|D^\gamma\Big(s_{\mathrm{SRF}}(x;n_t)-m_{\mathrm{SRF}}(x)
  \Big)\right\|_{L_q(\Omega_{L-1})}\leqslant\frac{\lambda_a\sigma}{\sqrt{n_t}}
  \leqslant\frac{2\lambda_a\epsilon}{\sqrt{n_t}}
\end{equation*}
holds with probability $1-a$, where $a=\frac{1}{\sqrt{2\pi}}\int_{-\lambda_a}^{\lambda_a}e^{-\frac{t^2}{2}}\ud t$.
\end{thm}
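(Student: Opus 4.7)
The plan is to exploit the i.i.d.\ structure induced by the random splitting of subsection~\ref{SRTF:s4:1} and invoke two classical limit theorems. Because each SRT samples its percentile $p_r$ independently and identically on $\{37,\ldots,62\}$, the predictors $\{s_{\mathrm{SRT}}^{(i)}(x)\}_{i\geqslant 1}$ are, for each fixed $x\in\Omega_{L-1}$, i.i.d.\ scalar random variables; each is a.s.\ bounded (combining the convergence estimates of Theorems~\ref{SRTF:thm:EEW}--\ref{SRTF:thm:EENM} with the finiteness of $|f(x)|$), hence integrable and square-integrable. Setting $m_{\mathrm{SRF}}(x):=\mathbb{E}[s_{\mathrm{SRT}}^{(1)}(x)]$, the Kolmogorov Strong Law of Large Numbers applied to this sequence yields
\begin{equation*}
s_{\mathrm{SRF}}(x;n_t)=\frac{1}{n_t}\sum_{i=1}^{n_t}s_{\mathrm{SRT}}^{(i)}(x)
\longrightarrow m_{\mathrm{SRF}}(x)\quad\text{almost surely},
\end{equation*}
which is the first claim.

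For the CLT part, the crux is bounding $\sigma:=\sqrt{\mathrm{Var}\,s_{\mathrm{SRT}}^{(1)}(x)}$ by $2\epsilon$. I would write, by the triangle inequality and then Jensen/Minkowski combined with the hypothesis $\|D^\gamma(s_{\mathrm{SRT}}^{(i)}-f)\|_{L_q}\leqslant\epsilon$,
\begin{equation*}
\|s_{\mathrm{SRT}}^{(1)}-m_{\mathrm{SRF}}\|_{L_q(\Omega_{L-1})}\leqslant
\|s_{\mathrm{SRT}}^{(1)}-f\|_{L_q(\Omega_{L-1})}+
\big\|\mathbb{E}[s_{\mathrm{SRT}}^{(1)}-f]\big\|_{L_q(\Omega_{L-1})}\leqslant 2\epsilon,
\end{equation*}
which propagates to a pointwise bound on the standard deviation. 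Then Lindeberg-Levy gives $\sqrt{n_t}(s_{\mathrm{SRF}}(x;n_t)-m_{\mathrm{SRF}}(x))$ converging in distribution to $N(0,\sigma^2)$, equivalently the approximate $N(0,\sigma/\sqrt{n_t})$ law asserted; the ``probability $1-a$'' concentration inequality is then the standard Gaussian-quantile reading associated with the integral defining $a$.

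The principal obstacle is promoting the scalar CLT to the $L_q(\Omega_{L-1})$ norm of the $D^\gamma$ derivative wrapping the statement. My plan is to commute $D^\gamma$ through the finite sum by linearity and through the expectation by dominated convergence (using the uniform derivative envelope provided by Theorem~\ref{SRTF:thm:WNbound}), reducing the question to an $L_q$-norm statement on centered i.i.d.\ functions. Since the scalar Lindeberg-Levy applies pointwise in $x$, I would then view $\|D^\gamma(s_{\mathrm{SRF}}(x;n_t)-m_{\mathrm{SRF}}(x))\|_{L_q(\Omega_{L-1})}$ itself as a single nonnegative scalar random variable whose second moment is controlled by $\sigma^2/n_t$ via Fubini applied to the integrated variance, and apply the Gaussian quantile bound to obtain exactly the inequality claimed; this route delivers the precise form of the theorem while avoiding the machinery of a full Banach-space CLT.
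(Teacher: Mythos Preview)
Your approach is essentially identical to the paper's: the paper's entire argument is the single sentence preceding the theorem, stating that since each $s_{\mathrm{SRT}}^{(i)}(x)$ converges to $f(x)$ with the relevant error estimates, the result follows ``together with the Strong Law of Large Numbers and the Lindeberg-Levy central limit theorem.'' Your proposal is in fact considerably more detailed than what the paper offers---your triangle-inequality bound for $\sigma\leqslant 2\epsilon$ and your discussion of how to handle the $L_q$ norm of $D^\gamma$ go well beyond the paper's one-line justification, which leaves those points implicit.
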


Obviously, the above result also holds for the SRF prediction defined in \eqref{SRTF:eq:SRF} that is more stable and is specially designed for overcoming the boundary effect of the error, as shown in Fig. \ref{SRTF:fig:3}. Combining the results of the previous subsection, one can obtain the error estimates for SRF predictions in the corresponding spaces.

\subsection{Complexity analysis}

Since the maximum depth of a binary tree is $\log_2N$ and the full data is only used for updating the residual, it is easy to see that:
\begin{thm}
Algorithm in section \ref{SRTF:s3} needs $\mathcal{O}(N\log_2N)$ time and $\mathcal{O}(N\log_2N)$ space in the worst case to train a SRT for $N$ arbitrary distributed points; and needs $\mathcal{O}(\log_2N)$ time in the worst case to make a prediction for a new point $x$. And the costs of algorithm in section \ref{SRTF:s4} are $n_t$ times that of the SRT for a SRF with $n_t$ SRTs.
\end{thm}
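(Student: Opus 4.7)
The argument rests on three structural facts about the SRT construction that I would establish first and then combine. First, the equal binary splitting in subsection \ref{SRTF:s3:4} sends exactly $\lceil N'/2 \rceil$ points to one child and the rest to the other, so the subtree rooted at any node of size $N'$ has depth at most $\lceil \log_2 N'\rceil$; in particular the whole tree has depth at most $\lceil \log_2 N\rceil$. Second, by the choice $N'_I = O(\bar n_c)$ dictated by \eqref{SRTF:eq:Anc}, every exploration phase operates on a point set whose cardinality is bounded by a constant depending only on the working parameters $\omega$ and the expected RAE $\epsilon_{\mathrm{E}}$, not on $N$. Third, the only step that touches the full local data $X'$ is the residual update \eqref{SRTF:eq:updError} (together with the projection and median splitting of subsection \ref{SRTF:s3:4}).

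With these in hand, I would cost a single node at level $\ell$ as follows. Generating the quasi-uniform sequence up to index $j+d+1$ by \eqref{SRTF:eq:Qjp1} costs $O(jN'_I)$; the incremental Householder QR that yields $R_j$ and $Q_j^\mathrm{T}r'(X'(I))$ from $R_{j-1}$ and $Q_{j-1}^\mathrm{T}r'(X'(I))$ is likewise $O(jN'_I)$; and the Voronoi-based selection \eqref{SRTF:eq:centerjp1} together with the termination check \eqref{SRTF:eq:centerT} is $O(jN'_I)$. Summing over $j\leqslant n_c$ and using $n_c,N'_I=O(1)$ bounds the exploration cost per node by a constant. The residual update \eqref{SRTF:eq:updError} and the splitting procedure of subsection \ref{SRTF:s3:4} are linear in $|X'|$. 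Since the node sizes at level $\ell$ partition $X$, the aggregate residual and splitting work at level $\ell$ is $O(N)$; summing over the $\lceil\log_2 N\rceil$ levels gives the training bound $\mathcal{O}(N\log_2 N)$.

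For the space bound, each node stores $O(n_c)=O(1)$ centers, coefficients and triangular factors, plus the projected coordinates and current residual vector for its own point set. Since at level $\ell$ the sum of $|X'|$ over nodes is at most $N$, the per-level storage is $O(N)$, and retaining the tree across levels costs $\mathcal{O}(N\log_2 N)$. For a prediction at a new point $x$, one descends the tree by evaluating the half-space test $x^\mathrm{T}\vec n'\lessgtr c'$ at each visited node, which is constant-time, and accumulates $\sum_i \alpha'_i e^{-\delta'^2\|x-\chi'_i\|_2^2}$, also $O(n_c)=O(1)$ work; since the traversal visits at most $\lceil\log_2 N\rceil$ nodes, the prediction cost is $\mathcal{O}(\log_2 N)$. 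The SRF claim is immediate from \eqref{SRTF:eq:SRF}: training and evaluating the $n_t$ constituent trees are independent, and the averaging step itself is $O(n_t)$, so the forest costs are exactly $n_t$ times the tree costs up to lower order terms.

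The delicate point, which I expect to be the main obstacle, is justifying that $n_c$ and hence $\bar n_c$ are bounded by constants independent of $N$. I would justify this by appealing to the termination criteria \eqref{SRTF:eq:centerT}: the exploration halts as soon as the conditioning estimate exceeds $\omega_1$ or the residual decrement drops below $\omega_2$, both of which are set in advance through $\omega$ and are independent of $N$. Once the per-node center budget is capped, $N'_I = O(\bar n_c)$ is capped as well, closing the dependence loop. The worst-case $N\log_2 N$ arises when the tree is balanced and reaches full depth before the data-insufficiency rule (with threshold $\omega_4\bar n_c$) triggers; otherwise the bounds only improve.
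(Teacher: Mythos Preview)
Your proposal is correct and follows essentially the same approach as the paper, which confines itself to the single sentence ``Since the maximum depth of a binary tree is $\log_2N$ and the full data is only used for updating the residual, it is easy to see that\ldots'' --- i.e., exactly your two structural facts (logarithmic depth from equal splitting, and the full data $X'$ being touched only by the residual update and the projection/median split). Your treatment is considerably more detailed than the paper's, particularly in costing out the quasi-uniform generation, the incremental Householder step, and in flagging the need for $n_c$ and $N'_I$ to be bounded independently of $N$; the paper simply takes all of this for granted.
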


This result shows that the SRT or SRF also yields the excellent performance in terms of efficiency in addition to accuracy and adaptability. It is worth pointing out that the algorithm in section \ref{SRTF:s3} is designed to achieve hierarchical parallel processing so that the training process can be accelerated using multi-core architectures.

\section{Numerical examples}
\label{SRTF:s6}

In this section we compare the performance of both SRT and SRF with the Gaussian process regression (GPR). For an approximation $s$ of the target function $f$ on a test dataset $Z=\{z_i\}_{i=1}^{N_t}$ of size $N_t$, we use the relative mean absolute error (RMAE) as a measure of accuracy, i.e.,
\begin{equation}\label{SRTF:eq:RMAE}
  \mathrm{RMAE}=\frac{\sum_{i=1}^{N_t}|s(z_i)-f(z_i)|}{\sum_{i=1}^{N_t}|f(z_i)|}.
\end{equation}
We use two test functions: one is Franke's function, which is defined as:
\begin{align}\label{SRTF:eq:Franke}
f(x)=&\frac{3}{4}\exp\left(-\frac{(9x_1\!-\!2)^2}{4}-\frac{(9x_2\!-\!2)^2}{4}\right) +\frac{3}{4}\exp\left(-\frac{(9x_1+1)^2}{49}-\frac{9x_2+1}{10}\right) \\
&+\frac{1}{2}\exp\left(-\frac{(9x_1\!-\!7)^2}{4}-\frac{(9x_2\!-\!3)^2}{4}\right) \!-\!\frac{1}{5}\exp\left(-(9x_1\!-\!4)^2\!-(9x_2\!-\!7)^2\right)\nonumber,
\end{align}
where $x\in[0,1]^d$ for $d\geqslant2$; and the other is local oscillating and defined as:
\begin{align}\label{SRTF:eq:LO}
g(x)=&-2x_1x_2+2x_2^2-330\exp\left(-\frac{\|x\|_2^2}{2}\right)\sin(2\|x\|_2^2),
\end{align}
where $x\in[-7,7]^d$ for $d\geqslant2$.

All our numerical tests are based on scattered data which are either randomly generated or the Halton sequence \cite{HaltonJH1960M_HaltonSequence}. In addition, the procedure for the above two methods at each sample size is repeated $5000$ times for investigating the stability of the results. We use {\ttfamily Matlab}'s function {\ttfamily fitrgp} to generate a GPR model trained using the same sample data of proposed methods. Fit the GPR model using the subset of regressors method for parameter estimation and fully independent conditional method for prediction. Standardize the predictors. Besides, since the computational complexity of GPR is $\mathcal{O}(N^3)$ for training work and $\mathcal{O}(N^2)$ for each prediction, where $N$ is the sample size, it is very difficult to use GPR for large data set, so the sample size is varied from $10^1$ to $10^4$ for all numerical tests by using GPR.

\subsection{Accuracy, sparsity, storage and computational time}

The size of Halton points are varied from $10^1$ to $10^4$ for Franke's function. The results are shown in Fig. \ref{SRTF:fig:7}. From the upper left of Fig. \ref{SRTF:fig:7}, as expected, the RMAE of both SRT and SRF are much lower than GPR as data point $N$ is large. Moreover, from the upper right of Fig. \ref{SRTF:fig:7}, we can find out that the average number of centers for SRT prediction at $1$ point is varying from $10$ to $182$. Besides, since the size of sample points is varied from $10^1$ to $10^4$, both the storage requirement and the computational time of the proposed methods are much lower than those of GPR.

\begin{figure}[!htb]
\centering
\begin{minipage}{0.8\textwidth}
  \centering
  \includegraphics[width=0.4\textwidth]{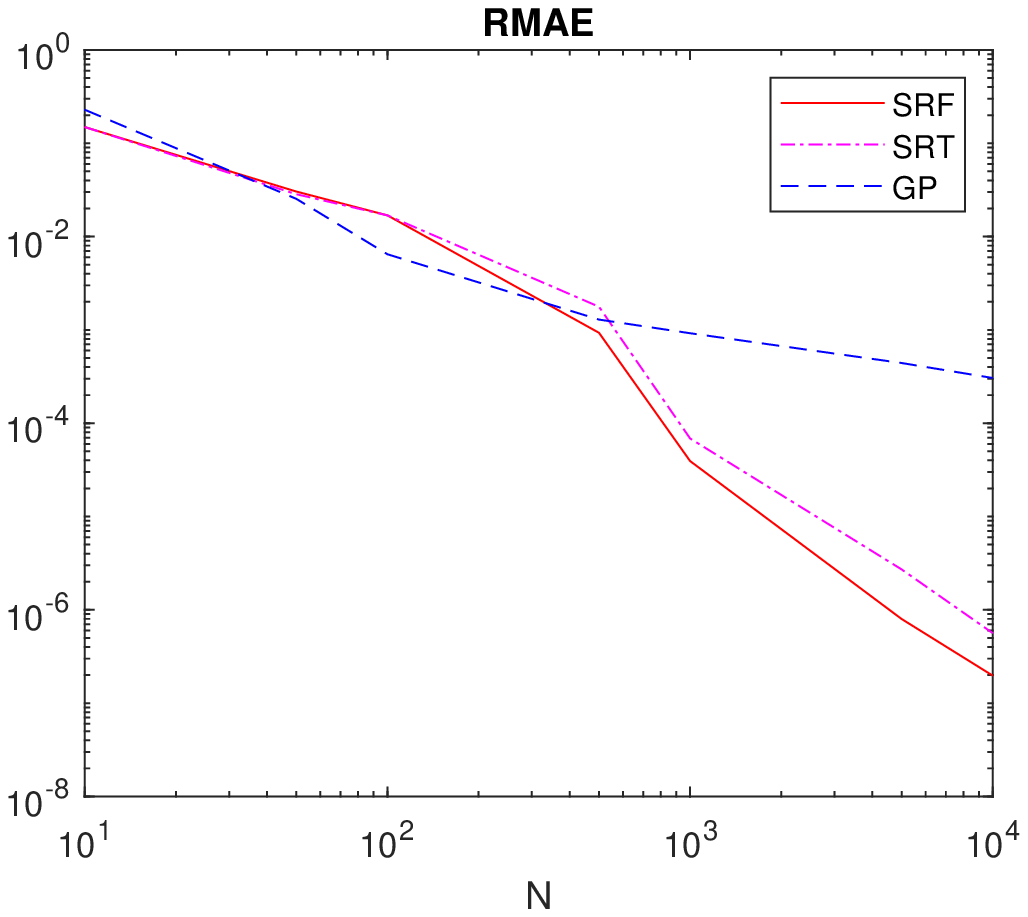}
  \includegraphics[width=0.4\textwidth]{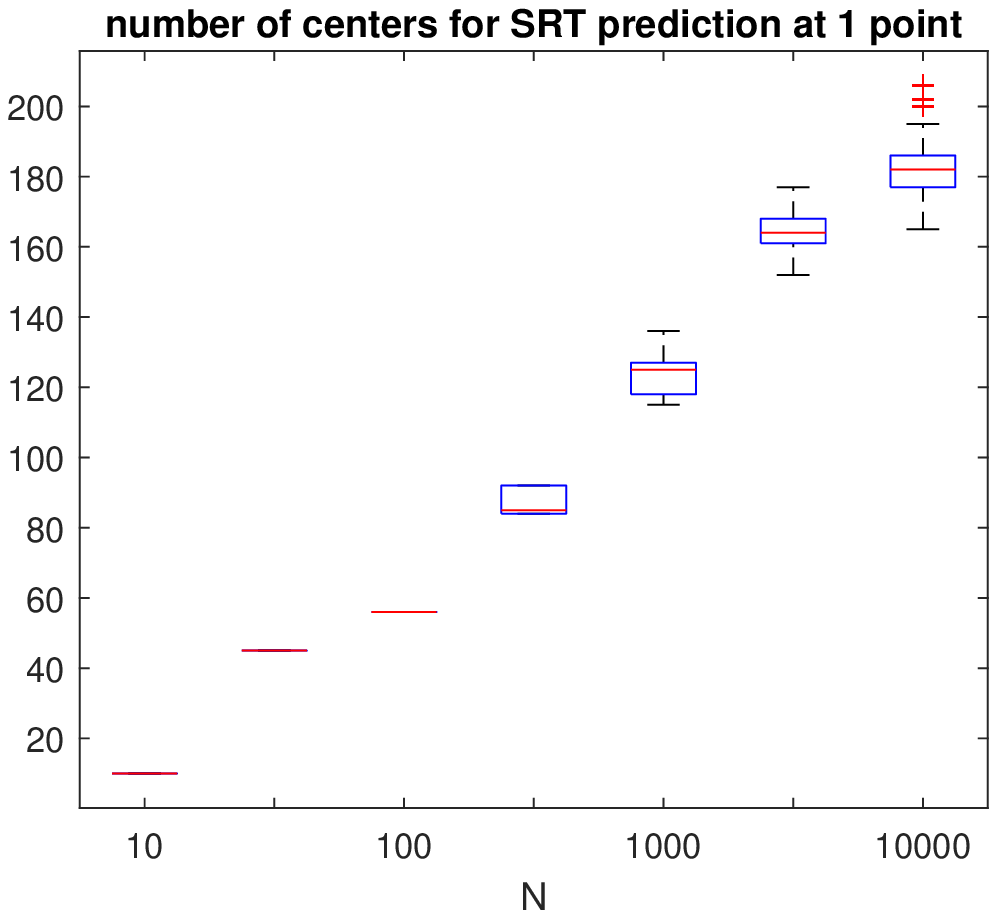}
\end{minipage}
\begin{minipage}{0.8\textwidth}
  \centering
  \includegraphics[width=0.4\textwidth]{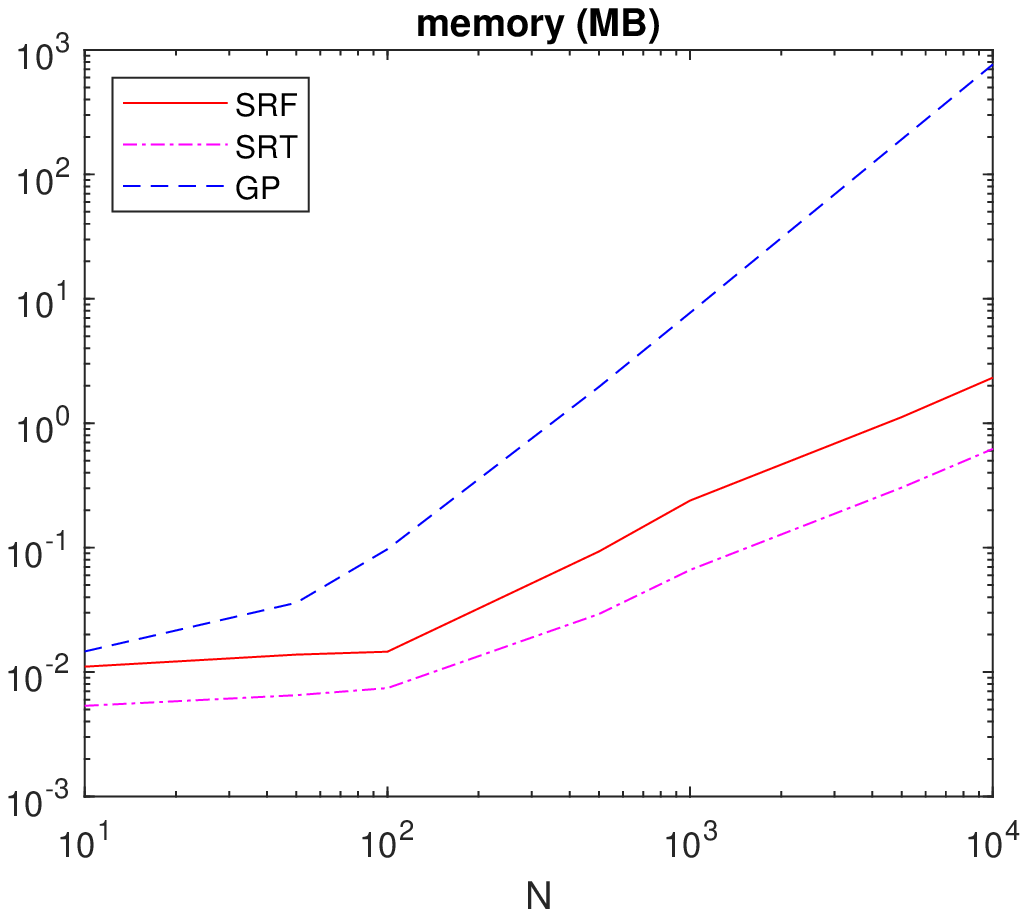}
  \includegraphics[width=0.4\textwidth]{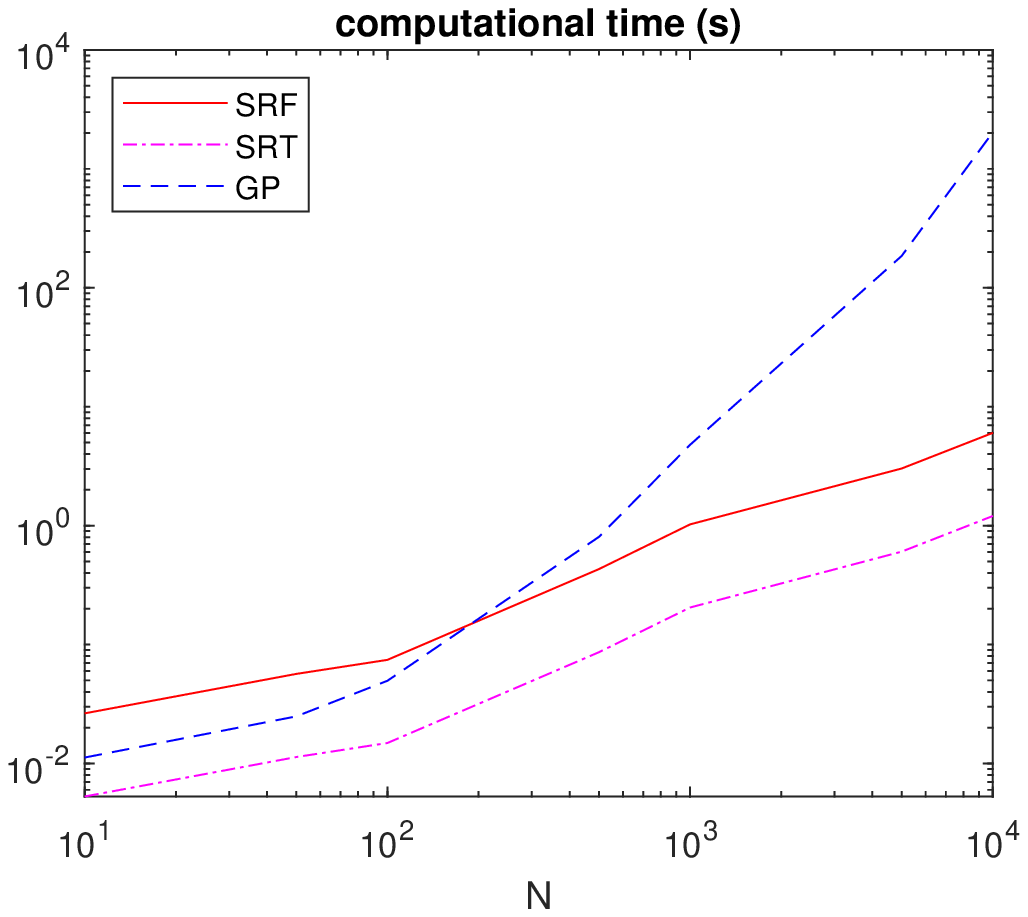}
\end{minipage}
\caption{\itshape\small{Results are shown for up to $N=10^4$ Halton points (with $N_t=5000$ test points different from the interpolation points) in $d=2$.}}
\label{SRTF:fig:7}
\end{figure}

\subsection{Insufficient data report}

We choose the second test function $g(x)$, $x\in[-7,7]^2$, to illustrate the insufficient data situation. From Fig. \ref{SRTF:fig:8} we can find out that since $g(x)$ is complicated near the central of domain and the RAE of residual still does not reach the expected error when the sample points $N=3000$; that is, the relevant node is lack of data at this area. Further, by adding the size of sample points $N$ to $6000$, as expected, from the lower left of Fig.\ref{SRTF:fig:8} we find out that the RAE clearly decreased (with the maximum RAE decreases from $0.1926$ to $0.0784$). Besides, from the lower right of Fig. \ref{SRTF:fig:8}, the median value of centers for SRT prediction at one point for both $N=3000$ and $N=6000$ are close to $110$.

\begin{figure}[!htb]
\centering
\begin{minipage}{0.8\textwidth}
  \centering
  \includegraphics[width=0.4\textwidth]{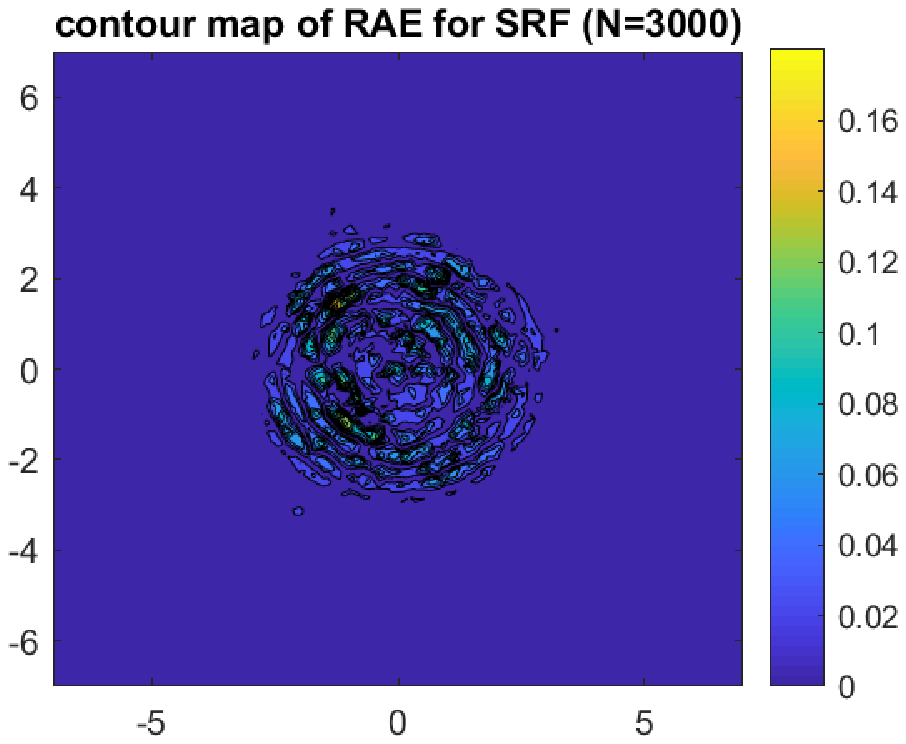}
  \includegraphics[width=0.4\textwidth]{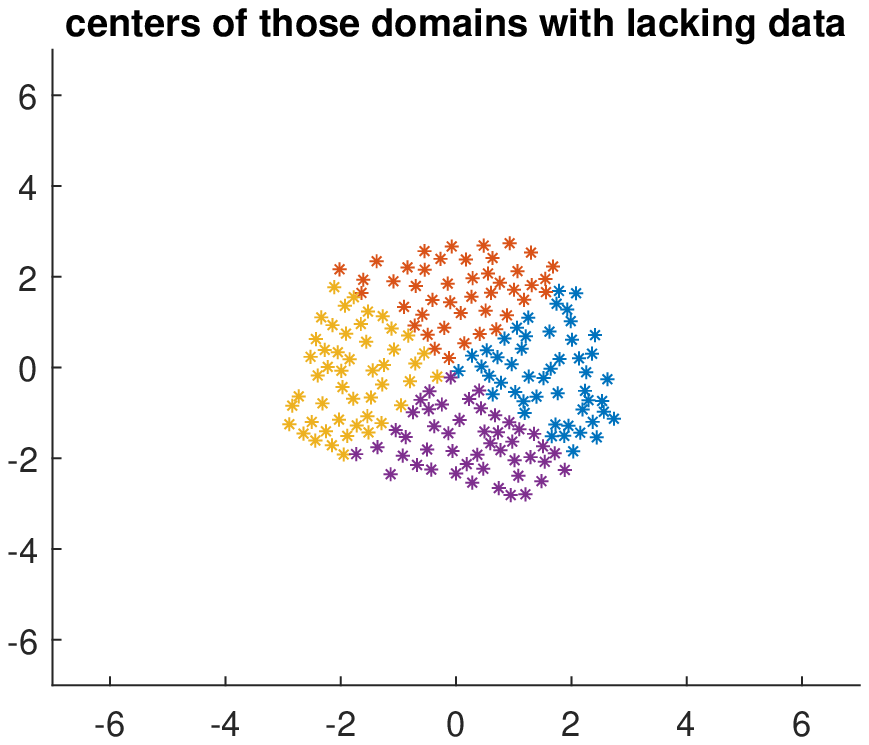}
\end{minipage}
\begin{minipage}{0.8\textwidth}
  \centering
  \includegraphics[width=0.4\textwidth]{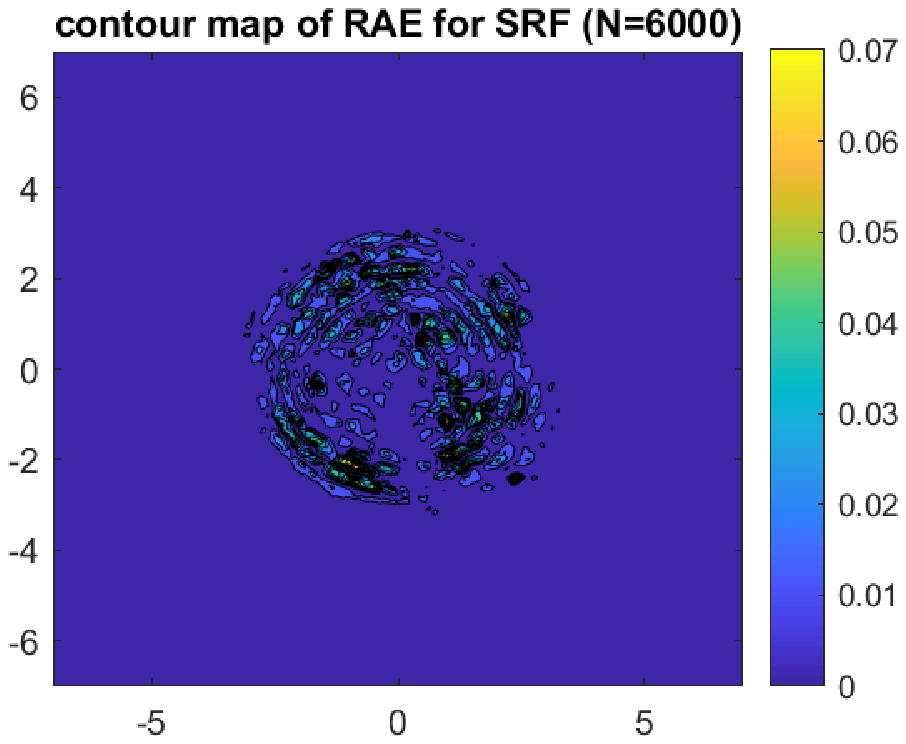}
  \includegraphics[width=0.4\textwidth]{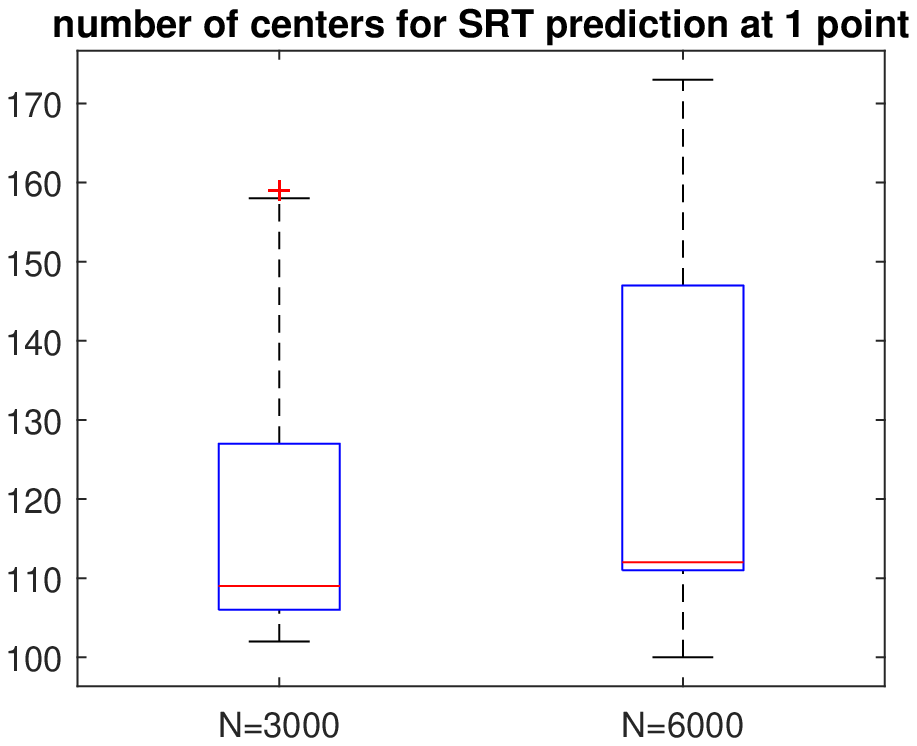}
\end{minipage}
\caption{\itshape\small{Results are shown for $N$ Halton points (with $N_t=10^4$ test points different from the interpolation points) in $d=2$.}}
\label{SRTF:fig:8}
\end{figure}

\subsection{$3$-dimensional problem}

The $3$-dimensional Franke's function can be shown in Fig. \ref{SRTF:fig:9}. The size of Halton points are varied from $10^1$ to $10^6$. We find out that the RMAEs of proposed methods are not as good as that of GPR when size of sample points is less than $10^4$. Further, since the size of sample points is varied from $10^4$ to $10^6$, leading to the value of error varying from $5.7224\times10^{-4}$ to $2.3126\times10^{-7}$ by using SRT, and from $1.3037\times10^{-4}$ to $4.7757\times10^{-8}$ by using SRF. Besides, from low figures of Fig. \ref{SRTF:fig:9}, it is noted that both the storage requirement and the computing time of proposed methods are less than those of GPR. The average number of centers for SRT prediction at one point is varying from $10$ to $853$.

\begin{figure}[!htb]
\centering
\begin{minipage}{0.8\textwidth}
  \centering
  \includegraphics[width=0.4\textwidth]{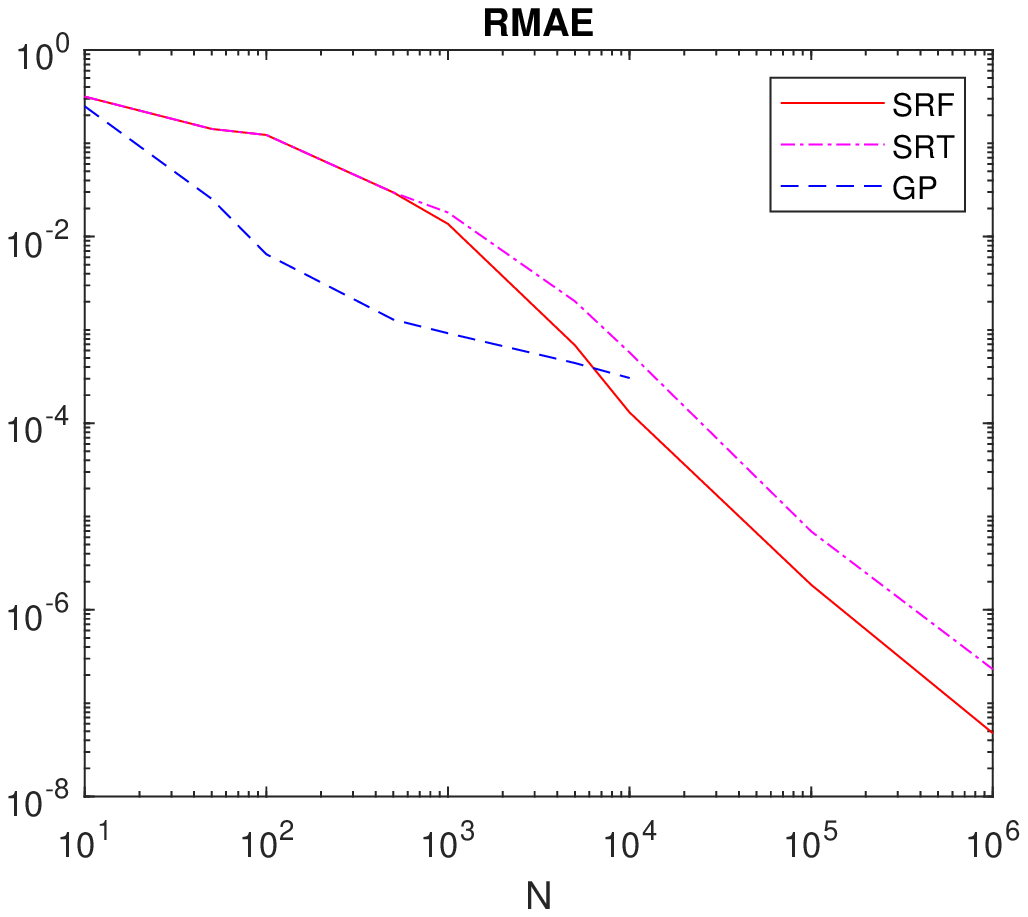}
  \includegraphics[width=0.4\textwidth]{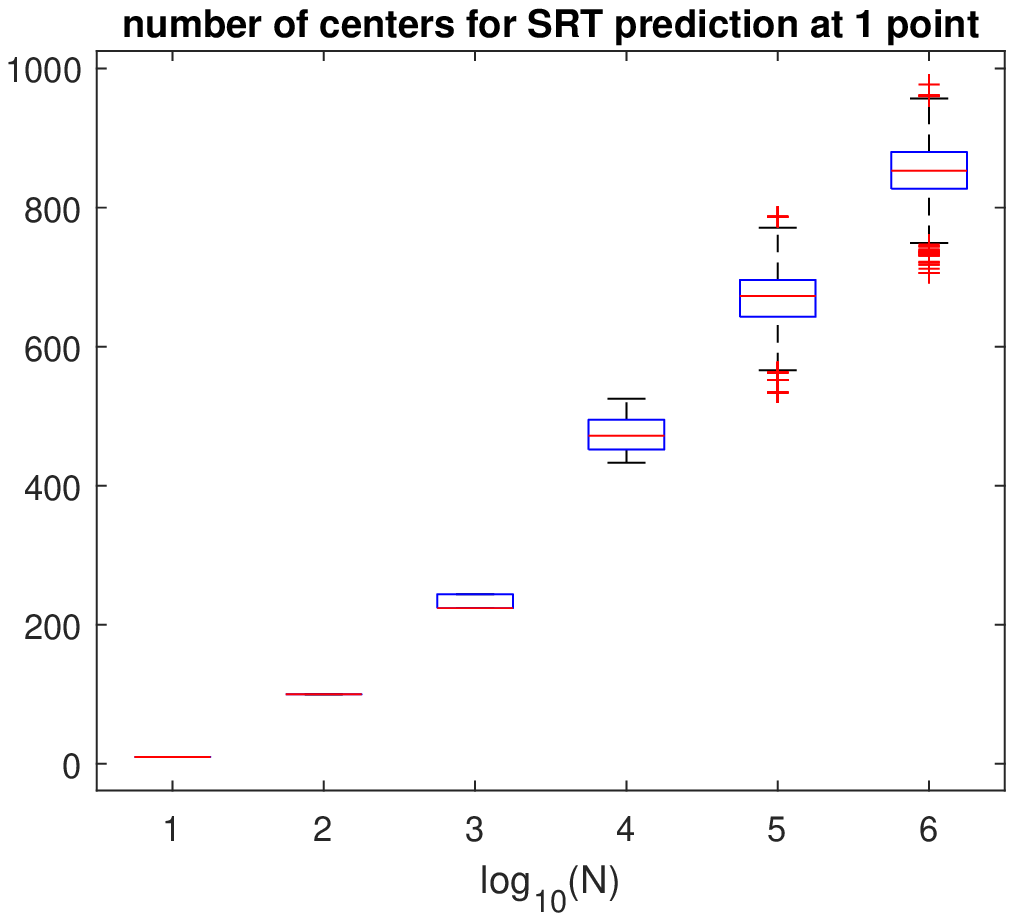}
\end{minipage}
\begin{minipage}{0.8\textwidth}
  \centering
  \includegraphics[width=0.4\textwidth]{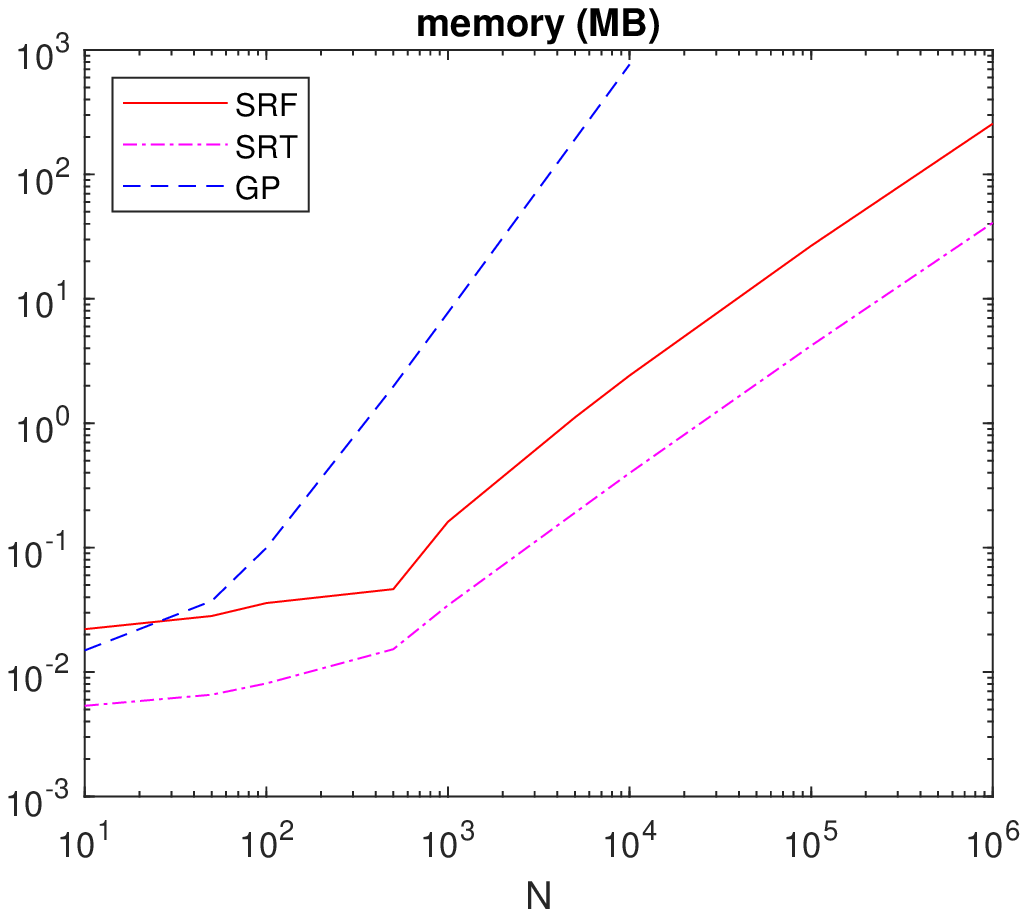}
  \includegraphics[width=0.4\textwidth]{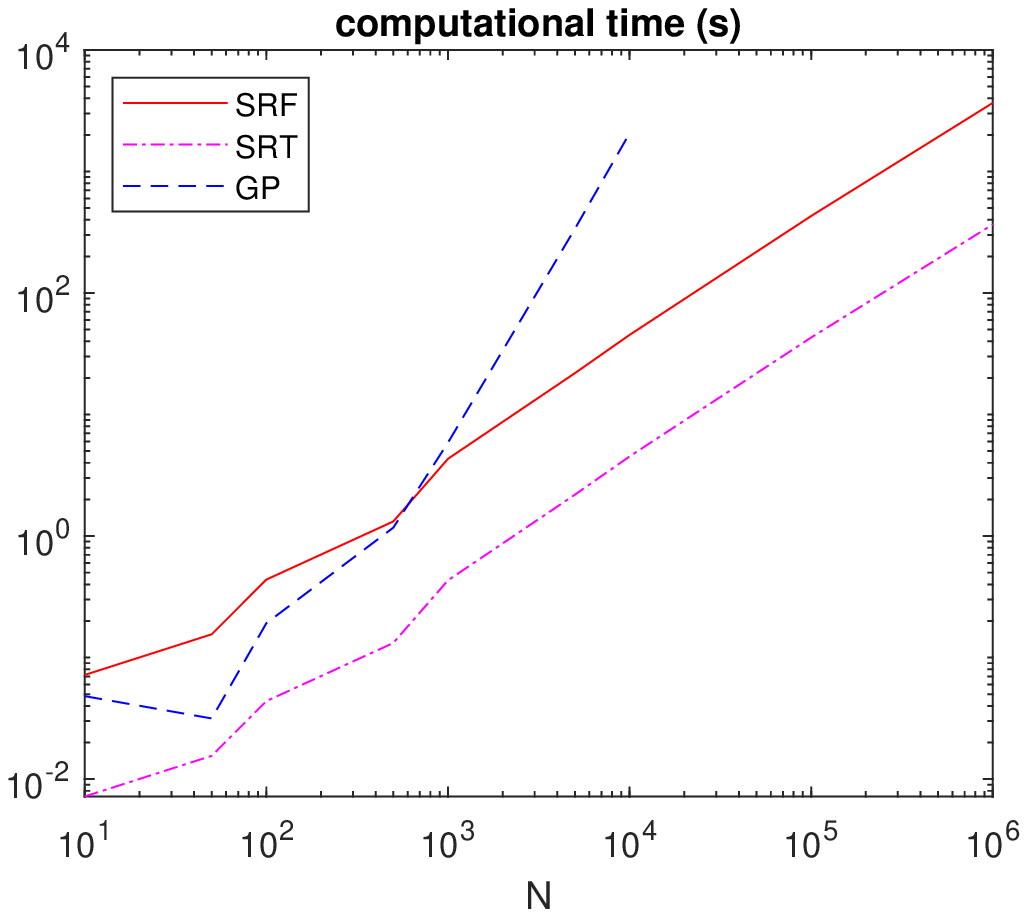}
\end{minipage}
\caption{\itshape\small{Results are shown for up to $N=10^6$ Halton points (with $N_t=5000$ test points different from the interpolation points) in $d=3$.}}
\label{SRTF:fig:9}
\end{figure}

\section{Conclusions}
\label{SRTF:s7}

In this work, we proposed two new methods for multivariate scattered data approximation, named Sparse residual tree (SRT) and Sparse residual tree (SRF), respectively. We proved that the time complexity of SRTs is less than $\mathcal{O}(N\log_2 N)$ for the initial work and $\mathcal{O}(\log_2N)$ for each prediction, and the storage requirement is less than $\mathcal{O}(N\log_2N)$, where $N$ is the data points. From the numerical experiments, we can find out that the proposed methods are good at dealing with cases where the data is sufficient or even redundant. For the higher dimensional problem, the proposed methods do not work as well as we expected. The possible reason is that the sample size is usually difficult to be sufficient or even redundant for higher dimensional problems, and the proposed methods tend to point out the possible local regions where data refinement is needed, rather than obtain approximations. It provides that the proposed methods can be used to solve the large data sets problems. In the following works, we will try to improve the proposed methods for solving higher dimensional problems.
\iffalse
since the sample size is always insufficient, the proposed methods tend to point out the possible local regions where data refinement is needed, rather than obtain approximations. It provides that the proposed methods can be used to solve the large data sets problems.
\fi
%% \iffalse
%\section*{Acknowledgments}

%% The Appendices part is started with the command \appendix;
%% appendix sections are then done as normal sections
%% \appendix

%% \section{}
%% \label{}

%% Referecnes
%%
%% Following citation commands can be used in the body text:
%% Usage of \cite is as follows:
%%   \cite{key}          ==>>  [#]
%%   \cite[chap. 2]{key} ==>>  [#, chap. 2]
%%   \citet{key}         ==>>  Author [#]

%% References with bibTeX database:

\bibliographystyle{unsrtnat}
\bibliography{MReferences}
\pdfbookmark[0]{References}{section}

% \clearpage
% \section*{list of table}
\iffalse
\clearpage
\section*{list of figure}
\fi

%% Authors are advised to submit their bibtex database files. They are
%% requested to list a bibtex style file in the manuscript if they do
%% not want to use model1-num-names.bst.

%% References without bibTeX database:

%%\begin{thebibliography}{00}

%% \bibitem must have the following form:
%% \bibitem{key}...
%%

%%\bibitem{}

%%\end{thebibliography}

\end{document}